\definecolor{darkgreen}{rgb}{0.0,0,0.9}
\DeclareMathAlphabet{\mathpzc}{OT1}{pzc}{m}{it}
\newtheorem{propo}{Proposition}[section]
\newtheorem{lemma}[propo]{Lemma}
\newtheorem{coro}[propo]{Corollary}
\newtheorem{thm}[propo]{Theorem}
\def\hS{\widehat{S}}
\def\cF{{\cal F}}
\def\cC{{\cal C}}
\def\cG{{\cal G}}
\def\cE{{\cal E}}
\def\reals{{\mathbb R}}
\def\eps{{\varepsilon}}
\def\prob{{\mathbb P}}
\def\E{{\mathbb E}}
\def\hOmega {\widehat{\Omega}}
\def\eff{{\rm eff}}
\def\rP{{\rm P}}
\def\L0{{L_0}}
\def\de{{\rm d}}
\def\<{\langle}
\def\>{\rangle}
\def\diag{{\rm diag}}
\def\bX{{\mathbf X}}
\def\htheta{\widehat{\theta}}
\def\hSigma{\widehat{\Sigma}}
\def\hsigma{\widehat{\sigma}}
\def\supp{{\rm supp}}
\def\F{{\sf F}}
\def\ind{{\mathbb I}}
\def\F{{\sf F}}
\def\normal{{\sf N}}
\def\sT{{\sf T}}
\def\id{{\rm I}}
\def\proj{{\rm P}}
\def\cG{{\cal G}}
\def\tZ{\widetilde{Z}}
\def\v*{v_0}
\def\T*{T_0}
\def\u*{u_0}
\def\F*{F_0}
\definecolor{olivegreen}{rgb}{0,0.6,0.4}
\def\hgamma{\widehat{\gamma}}
\def\htau{\widehat{\tau}}
\def\cN{{\cal N}}
\def\utheta{\underline{\theta}}
\def\otheta{\overline{\theta}}
\newcommand{\ajcomment}[1]{}
\newcommand{\labitem}[2]{%
\def\@itemlabel{\text{#1}}
\item
\def\@currentlabel{#1}\label{#2}}
\title{Nearly Optimal Sample Size in
Hypothesis Testing for High-Dimensional Regression}
\author{Adel Javanmard
            \footnote{Department of Electrical Engineering, Stanford
              University }
             \,and Andrea~Montanari 
            \footnote{Department of Electrical Engineering and
              Department of Statistics, Stanford University}
            }
\begin{document}

\maketitle

%%%%%%%%%%%%%%%%%%%%%%%%%%%%%%%%%%%%%%%%%%%%%%%%%%%%%%%%%%%%%%%%%%%%%%%%%%%%%%%%
\begin{abstract}
We consider the problem of fitting the parameters of a
high-dimensional linear regression model. In the regime where the
number of parameters $p$ is comparable to or exceeds the sample size $n$,
a successful approach uses an $\ell_1$-penalized least squares
estimator, known as Lasso.

Unfortunately, unlike for linear estimators (e.g., ordinary least
squares), no well-established method exists to compute confidence
intervals or p-values on the basis of the Lasso estimator. Very
recently, a line of work~\cite{javanmard2013hypothesis, confidenceJM, GBR-hypothesis} has addressed this problem by constructing a
debiased version of the Lasso estimator. 
In this paper, we study this approach for random design model, under the assumption that a good estimator
exists for the precision matrix of the design.
Our analysis improves over the state
of the art in that it establishes nearly optimal \emph{average} testing power if the
sample size $n$ asymptotically dominates $s_0 (\log p)^2$, with $s_0$ being the sparsity level (number of non-zero coefficients).
Earlier work obtains provable guarantees only for much larger sample size, namely it requires
$n$ to asymptotically dominate $(s_0 \log p)^2$. 

In particular, for random designs with a sparse precision matrix 
we show that an estimator thereof having the required properties can be computed efficiently.
Finally, we evaluate this approach on synthetic data
and compare it with earlier proposals. 
\end{abstract}

%%%%%%%%%%%%%%%%%%%%%%%%%%%%%%%%%%%%%%%%%%%%%%%%%%%%%%%%%%%%%%%%%%%%%%%%%%%%%%%%
\section{Introduction}

In the random design model for linear regression, we are given $n$ i.i.d. pairs 
$(Y_1,X_1), \dotsc, (Y_n,X_n)$ with $X_i \in \reals^p$.
The response variables $Y_i$ are given by
\begin{eqnarray}\label{eqn:regression}
Y_i \,=\, \<\theta_0,X_i\> + W_i\, ,\;\;\;\;\;\;\;\; W_i\sim
\normal(0,\sigma^2)\, .
\end{eqnarray}
Here $\<\,\cdot\,,\,\cdot\,\>$ is the standard scalar product in
$\reals^p$, and $\theta_0 \in \reals^p$ is an unknown but fixed vector of parameters. 
In matrix form,
letting  $Y = (Y_1,\dotsc,Y_n)^\sT$ and denoting by $\bX$ the design matrix with
rows $X_1^\sT,\dotsc, X_n^\sT$, we have
\begin{eqnarray}\label{eq:NoisyModel}
Y\, =\, \bX\,\theta_0+ W\, ,\;\;\;\;\;\;\;\; W\sim
\normal(0,\sigma^2 \id_{n\times n})\, .
\end{eqnarray}
The goal is to estimate the unknown vector of parameters $\theta_0 \in \reals^p$ from the observations $Y$ and $\bX$.
We are interested in the high-dimensional setting where the number of parameters is larger than the 
sample size, i.e., $p > n$, but the number of non-zero entries of $\theta_0$ is smaller than $p$.
We denote by $S\equiv \supp(\theta_0) \in [p]$ the support of $\theta_0$, i.e., the set of non-zero coefficients, and let $s_0 \equiv |S|$ be the 
sparsity level.

In the last decade, there has been a burgeoning interest in parameter estimation in high-dimensional setting.
A particularly successful approach is the Lasso~\cite{Tibs96,BP95} estimator which
promotes sparse reconstructions through an $\ell_1$ penalty:
\begin{align}
\htheta(Y,\bX;\lambda) \equiv \arg\min_{\theta\in\reals^p}
\Big\{\frac{1}{2n}\|Y-\bX\theta\|^2_2+\lambda\|\theta\|_1\Big\}\, . \label{eq:LassoEstimator}
\end{align}
In case the right hand side has more than one minimizer, one of them
can be selected arbitrarily for our purposes.
We will often omit the arguments $Y$, $\bX$, as they are clear from
the context. 

The Lasso is known to perform well in terms of 
prediction error $\|\bX(\htheta-\theta_0)\|^2_2$ and estimation
error, as measured for instance by $\|\htheta-\theta_0\|_2^2$
\cite{buhlmann2011statistics}.
In this paper we address the --far less understood-- problem of assessing uncertainty and statistical significance,
e.g., by computing confidence intervals or p-values.
This problem is particularly challenging in high dimension since good
estimators, such as the Lasso, are by necessity non-linear and
hence do not have a tractable distribution.

More specifically, we are interested in testing null hypotheses of the form:
\begin{eqnarray}
H_{0,i}: \, \theta_{0,i} = 0\,, \quad \quad \text{ for } i \in [p]\,, \label{eq:hypotheses}
\end{eqnarray}
and assigning $p$-values for these tests. Rejecting $H_{0,i}$
corresponds to inferring that $\theta_{0,i}\neq 0$. 
A related question is the one of computing confidence intervals.
Namely, for a given $i\in[p]$, and $\alpha\in(0,1)$ we want to determine
$\utheta_i,\otheta_i\in \reals$ such that 
\begin{eqnarray}
\prob(\theta_i\in[\utheta_i,\otheta_i])\ge 1-\alpha\, .
\end{eqnarray}

\subsection{Main idea and summary of contributions}

A series of recent papers have developed the idea of `de-biasing' the
Lasso estimator $\htheta$, by defining 
\begin{align}
\htheta^u = \htheta + \frac{1}{n}\, M\bX^\sT(Y- \bX\htheta)\,.
\end{align}
Here $M\in\reals^{p\times p}$ is a matrix that depends on the design matrix $\bX$, and
aims at decorrelating the columns of $\bX$. 
A possible interpretation of this construction is that the term $\bX^\sT(Y- \bX\htheta)/(n\lambda)$
is a subgradient of the $\ell_1$ norm at the Lasso solution $\htheta$. By adding a term
proportional to this subgradient, we compensate for the bias introduced by the $\ell_1$
penalty. It is worth noting that $\htheta^u$ is no longer a sparse
estimator.
In certain regimes, and
for suitable choices of $M$, it was proved that $\htheta^u-\theta_0$
is approximately Gaussian with mean $0$, hence leading to the
construction of p-values and confidence intervals.

More specifically, let $\Sigma = \E(X_1X_1^\sT)$ be the population covariance
matrix, and $\Omega = \Sigma^{-1}$ denote the \emph{precision matrix}. 
In~\cite{javanmard2013hypothesis}, the present authors
assumed the precision matrix to be known and proposed to use $M = c\Omega$,
for an explicit constant $c$. A plug-in estimator for $\Omega$ was also suggested for sparse
covariances $\Sigma$. Furthermore, asymptotic validity and minimax optimality of the method 
were proven for uncorrelated Gaussian
designs ($\Sigma = \id$). A conjecture was derived for a broad class of covariances 
using  statistical physics arguments.
De Geer, B\"uhlmann and Ritov 
\cite{GBR-hypothesis} used  a similar construction with $M$ an estimate of $\Omega$, which is appropriate for
sparse precision matrices $\Omega$.  These authors prove validity of their method
for sample size $n$ that asymptotically dominates $(s_0\log p)^2$. 
In~\cite{confidenceJM}, the present authors propose to construct $M$ by solving a convex program that aims at optimizing
two objectives. First, control the bias of $\htheta^u$, and second minimize the variance of $\htheta^u_i$. 
Minimax optimality was established for sample size $n$ that asymptotically dominates $(s_0\log p)^2$,
without however requiring $\Omega$ to be sparse. Additional related
work can be found in \cite{BuhlmannSignificance,ZhangZhangSignificance} .

Note that nearly optimal estimation via the Lasso is possible for
significantly smaller sample size, namely for $n \ge C s_0 \log
p$, for some constant $C$ \cite{Dantzig,BickelEtAl}.
This suggests the following natural question 
\begin{quote}
\emph{Is it possible to design a minimax optimal test for hypotheses
  $H_{0,i}$, for optimal sample size $n = O (s_0 \log p)$?}
\end{quote}
While the results of \cite{javanmard2013hypothesis} suggest a positive
answer, they assume $\Omega$ to be known, and apply only
asymptotically as $n,p\to\infty$.
In this paper we partially answer this question, by proving the
following results.
\begin{description}
\item[{\bf General subgaussian designs.}] We do not make any
  assumption  on the rows of $\bX$ except that they are independent
  and identically distributed, with common law $p_X$ with subgaussian
  tails, and non-singular covariance. This model is well suited for statistical
  applications wherein the pairs $(Y_i,X_i)$ are drawn at random from
  a population.

Our results in this case holds conditionally on the availability of
  an estimator $\hOmega$ of the precision matrix such
  that\footnote{Here $\|A\|_{\infty}$ denotes the $\ell_{\infty}$
    operator norm of the matrix $A$.}   
$\|\hOmega-\Omega\|_\infty = o(1/\sqrt{\log p})$. Then, a testing procedure
is developed that is minimax optimal with nearly optimal sample size,
namely for $n$ that asymptotically dominates $s_0 (\log p)^2$. Here `optimality' is measured 
in terms of the average power of tests for hypotheses
$H_{0,i}$ with average taken over the coordinates $i\in [p]$. To be more specific, the testing
procedure is constructed based on the debiased estimator $\htheta^u$, where
we set $M= \hOmega$.
\item[{\bf Subgaussian designs with sparse inverse covariance.}] In this case, the rows are subgaussian
  with a common covariance $\Sigma$, such that $\Omega = \Sigma^{-1}$ is sparse. For  this model, an
  estimator with the required properties exists, and was used in
  \cite{GBR-hypothesis}. We can therefore  establish
  unconditional results and prove optimality of the present test. 

   With respect to earlier analysis \cite{GBR-hypothesis}, our results
   apply to much smaller sample size, namely $n$ needs to dominate
   $s_0(\log p)^2$ instead of $(s_0\log p)^2$. On the other hand,
   guarantees are only provided with respect to average  power of the
   test.
   Roughly speaking, our results in this case imply that the method of
   \cite{GBR-hypothesis}   has significantly broader domain of validity than
   initially expected.
\end{description}
While the assumption of sparse inverse covariance
is admittedly restrictive, it arises naturally in a number of contexts.
For instance, it is relevant for the problem of learning sparse
Gaussian graphical models \cite{MeinshausenBuhlmann}. In this case, the set
of edges incident on a specific vertex can be encoded in the vector
$\theta_0$. It also played a pivotal role in compressed sensing, as
one of the first model in which an optimal tradeoff between sparsity $s_0$
and sample size $n$ was proven to hold
\cite{CandesTao,DoTa05,Wainwright2009LASSO}.

Covariance estimators satisfying the condition
$\|\hOmega-\Omega\|_\infty = o(1/\sqrt{\log p})$ can be constructed
under other structural assumptions than sparsity. Our general 
theory allows to build hypothesis testing methods for each of these cases.
We expect this to spur progress in other settings as well.

Finally, we evaluate our procedure on synthetic data, comparing its performance with the 
method of~\cite{confidenceJM}. 

%%%%%%%%%%%%%%%%%%%%%%
\subsection{Definitions and notations}

Throughout $\Sigma = \E\{X_1X_1^{\sT}\}$ will be referred to as the
covariance, and $\Omega\equiv \Sigma^{-1} \in \reals^{p\times p}$ as
the precision matrix.
Without loss of generality, we will assume that the columns of $\bX$ are normalized so that
$\Sigma_{ii}=1$.  (This normalization is only assumed for the
analysis, and is not required for  the hypothesis testing procedure or the
construction of confidence intervals.)

For a matrix $A$ and set of indices $I,J$, we let $A_{I,J}$ denote the submatrix formed 
by the rows in $I$ and columns in $J$. Also,
$A_{I,\cdot}$ (resp. $A_{\cdot,I}$) denotes the submatrix
containing just the rows (resp. columns) in $I$. 
Likewise, for a vector $v$, $v_I$ is the restriction
of $v$ to indices in $I$. 
The maximum and the minimum singular values of $A$ are respectively denoted 
by $\sigma_{\max}(A)$ and $\sigma_{\min}(A)$.
We write $\|v\|_p$ for the standard $\ell_p$ norm of a vector $v$
(omitting the subscript in the case $p=2$)
and  $\|v\|_0$ for the number of nonzero entries of  $v$. 
For a matrix $A$, $\|A\|_p$ is its $\ell_p$ operator norm, and $|A|_p$ is the elementwise $\ell_p$ norm, i.e., $|A|_p = (\sum_{i,j} |A_{ij}|^p)^{1/p}$.
Further $|A|_\infty = \max_{i,j}|A_{ij}|$.
We use the notation $[n]$ for the set $\{1,\dotsc,n\}$. 
For a vector $v$, $\supp(v)$ represents
the positions of nonzero entries of $v$. 

The standard normal distribution function is denoted by
$\Phi(x) \equiv \int_{-\infty}^x e^{-t^2/2} \de t/\sqrt{2\pi}$. For two functions $f(n)$ and $g(n)$,  the notation
$f(n) = \omega(g(n))$ means that $f$ dominates $g$ asymptotically, namely, for
every fixed positive $C$, there exists $n_0$ such that $f(n) \ge C g(n)$ for $n > n_0$.

The sub-gaussian norm of a random variable $Z$, denoted by $\|Z\|_{\psi_2}$, is 
defined as
$$\|Z\|_{\psi_2} = \sup_{q\ge 1} q^{-1/2} (\E|Z|^q)^{1/q}\,.$$
The sub-gaussian norm of a random vector $Z$ is defined as
$\|Z\|_{\psi_2} = \sup_{\|x\|=1} \|\<Z,x\>\|_{\psi_2}$.

Finally, the sub-exponential norm of random variable $Z$ is defined as
$$\|Z\|_{\psi_1} = \sup_{q\ge 1} q^{-1} (\E|Z|^q)^{1/q}\,.$$

%=========================================================
\section{Debiasing the Lasso estimator}
\label{sec:Debiased}
Let $\hOmega$ be an estimate of the precision matrix $\Omega$.
We define estimator $\htheta^u$ based on the Lasso solution $\htheta$ and
 $\hOmega$, as per Eq.~\eqref{eq:hthetau} in Table $1$. The following proposition provides 
 a decomposition of the residual $\htheta^u- \theta_0$, which is useful in characterizing the limiting distribution
 of $\htheta^u$. Its proof follows readily from the proof of Theorem 2.1 in~\cite{GBR-hypothesis}, and
 is given in Appendix~\ref{app:main_thm} for the reader's convenience.
\begin{propo}\label{pro:main_thm}
Consider the linear model~\eqref{eqn:regression} and let $\htheta^u$ be defined as per
Eq.~\eqref{eq:hthetau}.
Then,
\begin{align*}
\sqrt{n} (\htheta^u - \theta_0) &= Z + \Delta\,, \\
Z | \bX \sim \normal(0,\sigma^2 \hOmega \hSigma\hOmega^\sT)\,, \quad&
\Delta = \sqrt{n} (\hOmega \hSigma-\id) (\theta_0 - \htheta)\,.
\end{align*}
\end{propo}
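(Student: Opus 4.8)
The plan is to start from the definition of the debiased estimator
$\htheta^u = \htheta + \frac1n\,\hOmega\,\bX^\sT(Y-\bX\htheta)$ (this is Eq.~\eqref{eq:hthetau} in Table~1) and simply substitute the model $Y=\bX\theta_0+W$. Writing $\hSigma \equiv \bX^\sT\bX/n$ for the empirical covariance, we get
\begin{align*}
\htheta^u - \theta_0 &= \htheta - \theta_0 + \frac1n\,\hOmega\,\bX^\sT\big(\bX(\theta_0-\htheta)+W\big)\\
&= (\htheta-\theta_0) - \hOmega\hSigma(\htheta-\theta_0) + \frac1n\,\hOmega\bX^\sT W\\
&= \frac1n\,\hOmega\bX^\sT W - (\hOmega\hSigma-\id)(\htheta-\theta_0)\,.
\end{align*}
Multiplying through by $\sqrt n$ identifies the two claimed pieces: $Z = \frac{1}{\sqrt n}\hOmega\bX^\sT W$ and $\Delta = \sqrt n\,(\hOmega\hSigma-\id)(\theta_0-\htheta)$, which is exactly the stated $\Delta$.

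It remains to identify the conditional law of $Z$ given $\bX$. Since $\hOmega$ is a (deterministic) function of $\bX$ and $W\sim\normal(0,\sigma^2\id_{n\times n})$ is independent of $\bX$, the vector $Z = \frac{1}{\sqrt n}\hOmega\bX^\sT W$ is, conditionally on $\bX$, a linear image of a Gaussian vector, hence Gaussian with mean $0$. Its conditional covariance is
\[
\E\big[ZZ^\sT\mid \bX\big] = \frac1n\,\hOmega\bX^\sT\,\E[WW^\sT]\,\bX\hOmega^\sT = \frac{\sigma^2}{n}\,\hOmega\bX^\sT\bX\hOmega^\sT = \sigma^2\,\hOmega\hSigma\hOmega^\sT\,,
\]
using $\hSigma = \bX^\sT\bX/n$. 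This gives $Z\mid\bX \sim \normal(0,\sigma^2\hOmega\hSigma\hOmega^\sT)$, completing the proof.

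There is essentially no obstacle here: the statement is a pure algebraic rearrangement of the estimator's definition plus the elementary fact that a linear transformation of a Gaussian is Gaussian with the transformed covariance. The only points requiring a word of care are (i) that $\hOmega$ depends on $\bX$ only, so it behaves as a constant once we condition on $\bX$, and (ii) keeping track of the $1/n$ normalization in $\hSigma$ so the variance comes out as $\sigma^2\hOmega\hSigma\hOmega^\sT$ rather than off by a factor of $n$. As the excerpt notes, this decomposition is the one underlying Theorem~2.1 of~\cite{GBR-hypothesis}, and the substance of the paper lies in subsequently controlling the bias term $\Delta$ and analyzing the Gaussian term $Z$, not in establishing this proposition.
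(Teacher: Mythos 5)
Your proposal is correct and follows essentially the same route as the paper's own proof in Appendix~\ref{app:main_thm}: substitute $Y=\bX\theta_0+W$ into Eq.~\eqref{eq:hthetau}, rearrange to identify $Z=\hOmega\bX^\sT W/\sqrt{n}$ and $\Delta=\sqrt{n}(\hOmega\hSigma-\id)(\theta_0-\htheta)$, and read off the conditional Gaussian law of $Z$ given $\bX$. Your explicit covariance computation and the remark that $\hOmega$ is measurable with respect to $\bX$ merely spell out details the paper leaves implicit.
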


\begin{algorithm}[t]
\caption*{{\bf Table 1:} Unbiased estimator for $\theta_0$ in high dimensional linear regression models}
\begin{algorithmic}[1]

\REQUIRE Measurement vector $y$, design matrix $\bX$, parameter $\lambda_n$, estimated precision matrix $\hOmega$.

\ENSURE Unbiased estimator $\htheta^u$.

\STATE Let $\htheta = \htheta(\lambda_n)$ be the Lasso estimator as per Eq.~\eqref{eq:LassoEstimator}.

\STATE Define the estimator $\htheta^u$ as follows:
\begin{eqnarray}
\htheta^u = \htheta + \frac{1}{n}\, \hOmega \bX^\sT(Y - \bX \htheta) \label{eq:hthetau}
\end{eqnarray}
\end{algorithmic}
\end{algorithm}

We recall the definition of \emph{restricted eigenvalues} as given in~\cite{BickelEtAl}:
$$ \phi_{\max}(t) \equiv \underset{1\le \|v\|_0 \le t}{\max}\, \frac{\|\bX v\|_2^2}{n\|v\|_2^2}\,.$$
It is also convenient to recall the following \emph{restricted eigenvalue} (RE) assumptions. 
\begin{description}
\item[{\sc Assumption} ${\rm RE}(s,c)$.]
For some integer $s$ such that $1\le s \le p$ and a positive number $c$, the following condition holds:
\[
\kappa(s,c) \equiv \min_{J\subseteq [p]: |J|\le s} \quad \min_{v \neq 0: \|v_{J^c}\|_1 \le c \|v_J\|_1} \quad \frac{\|\bX v\|_2}{\sqrt{n} \|v_J\|_2} > 0\,.
\]
The assumption RE$(s,c)$ has been used to establish bounds on the prediction loss 
and on the $\ell_1$ loss of the Lasso. 
\item[{\sc Assumption} ${\rm RE}(s,q,c)$.]
 Let $s, q$ be integers such that $1\le s \le p/2$ and $q \ge s$, $s+q \le p$.
For a vector $v \in \reals^p$ and a set of indices $J \subseteq [p]$ with $|J| \le s$, denote by $J_1$ the subset of 
$[p]$ corresponding to the $q$ largest coordinates of $v$ (in absolute value) and define $J_2 \equiv J \cup J_1$.
We say that $\bX$ satisfies ${\rm RE}(s,q,c)$ with constant
$\kappa(s,q,c)$ if 
\[
\kappa(s,q,c) \equiv \min_{J\subseteq[p]: |J| \le s} \quad \min_{v\neq0: \|v_{J^c}\|_1 \le c \|v_J\|_1} \quad \frac{\|\bX v\|_2}{\sqrt{n} \|v_{J_2}\|_2}>0\,.
\]
This  assumption has been used to bound the $\ell_p$
loss of the Lasso with $1<p \le 2$~~\cite{BickelEtAl}.
\end{description}

The following lemma is a minor improvement over \cite[Theorem 7.2]{BickelEtAl} in that it uses $\phi_{\max}(n)$
instead of $\phi_{\max}(p)$.
\begin{propo}[\cite{BickelEtAl}]\label{pro:Bickel}
Let  assumption RE$(s_0,3) > 0$ be satisfied. Consider the Lasso selector $\htheta$ with $\lambda = \sigma \sqrt{2\log p/n}$.
Then, with high probability, we have
\begin{eqnarray}
\|\htheta\|_0 &\le& \frac{64\phi_{\max}(n)^2}{\kappa(s_0,3)^2} s_0\,.\label{eq:Lasso-supp-size}
\end{eqnarray}
If assumption RE$(s_0,q,3)$ with constant $\kappa = \kappa(s_0,q,3)$ is satisfied, then with high probability,
\begin{eqnarray}
\|\htheta - \theta_0\|_2^2 \le Cs_0 \frac{\sigma^2 \log p}{n}\,, \label{eq:l2}
\end{eqnarray}
where $C = C(\kappa)$ is bounded for $\kappa$ bounded away from $0$.
\end{propo}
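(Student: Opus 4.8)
\emph{Proof plan.}
The argument is the standard analysis of the Lasso; the only nonroutine point is obtaining $\phi_{\max}(n)$ rather than $\phi_{\max}(p)$ in \eqref{eq:Lasso-supp-size}. All bounds are derived on the event $\cE\equiv\{\|\bX^\sT W/n\|_\infty\le\lambda/2\}$: conditioning on $\bX$, the coordinates of $\bX^\sT W/n$ are centered Gaussian with variances $\sigma^2\|\bX_{\cdot,i}\|_2^2/n^2=O(\sigma^2/n)$ (the columns of $\bX$ being normalized), so a union bound over $i\in[p]$ gives $\prob(\cE)\to1$ for $\lambda$ of the indicated order. On $\cE$, the Lasso basic inequality for $v\equiv\htheta-\theta_0$, with $S\equiv\supp(\theta_0)$ and $|S|=s_0$, reads
\[
\frac{1}{2n}\|\bX v\|_2^2+\frac{\lambda}{2}\|v_{S^c}\|_1\;\le\;\frac{3\lambda}{2}\,\|v_S\|_1\,,
\]
so $v$ satisfies the cone condition $\|v_{S^c}\|_1\le3\|v_S\|_1$ and is therefore admissible for both ${\rm RE}(s_0,3)$ and ${\rm RE}(s_0,q,3)$. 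Using $\|v_S\|_1\le\sqrt{s_0}\|v_S\|_2$ and ${\rm RE}(s_0,3)$ in the display yields the prediction bound $\|\bX v\|_2^2/n\le9\lambda^2s_0/\kappa(s_0,3)^2$ and hence $\|v\|_1\le4\|v_S\|_1\le12\lambda s_0/\kappa(s_0,3)^2$.

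\textbf{The $\ell_2$ bound \eqref{eq:l2}.}
This is the $\ell_p$-loss argument of \cite{BickelEtAl} with $p=2$. With $J=S$, let $J_1$ index the $q$ largest-in-modulus coordinates of $v_{S^c}$ and $J_2=J\cup J_1$. A rearrangement estimate gives $\|v_{J_2^c}\|_2\le q^{-1/2}\|v_{S^c}\|_1\le3\sqrt{s_0/q}\,\|v_{J_2}\|_2\le3\|v_{J_2}\|_2$ (using $q\ge s_0$), so $\|v\|_2\le4\|v_{J_2}\|_2$. Combining the basic inequality with ${\rm RE}(s_0,q,3)$ gives $\frac12\kappa(s_0,q,3)^2\|v_{J_2}\|_2^2\le\frac1{2n}\|\bX v\|_2^2\le\frac{3\lambda}{2}\sqrt{s_0}\,\|v_{J_2}\|_2$, hence $\|v_{J_2}\|_2\le3\lambda\sqrt{s_0}/\kappa(s_0,q,3)^2$. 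Therefore $\|v\|_2^2\le144\,\lambda^2s_0/\kappa(s_0,q,3)^4=C\,s_0\sigma^2\log p/n$ with $C=288/\kappa(s_0,q,3)^4$, which is bounded when $\kappa(s_0,q,3)$ is bounded away from $0$.

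\textbf{The support-size bound \eqref{eq:Lasso-supp-size}.}
Let $T=\supp(\htheta)$ and let $\bX_{\cdot,T'}$ be the submatrix of columns of $\bX$ indexed by a set $T'$. The KKT stationarity conditions for \eqref{eq:LassoEstimator} force $|\bX_{\cdot,i}^\sT(Y-\bX\htheta)|=n\lambda$ for every $i\in T$, whence $\|\bX_{\cdot,T'}^\sT(Y-\bX\htheta)\|_2=\sqrt{|T'|}\,n\lambda$ for \emph{any} $T'\subseteq T$. Take $T'\subseteq T$ with $|T'|=\min(|T|,n)$; writing $Y-\bX\htheta=W-\bX v$, the triangle inequality, the definition of $\cE$, and $\|\bX_{\cdot,T'}^\sT\bX v\|_2\le\sigma_{\max}(\bX_{\cdot,T'})\|\bX v\|_2\le\sqrt{n\,\phi_{\max}(|T'|)}\,\|\bX v\|_2$ give
\[
\sqrt{|T'|}\,n\lambda\;\le\;\sqrt{|T'|}\,\frac{n\lambda}{2}+\sqrt{n\,\phi_{\max}(|T'|)}\,\|\bX v\|_2\,.
\]
Since $|T'|\le n$ forces $\phi_{\max}(|T'|)\le\phi_{\max}(n)$, substituting the prediction bound of the first paragraph and rearranging gives $|T'|\le 36\,\phi_{\max}(n)\,s_0/\kappa(s_0,3)^2$, which yields \eqref{eq:Lasso-supp-size} (using $\phi_{\max}(n)\ge1$ up to lower-order terms, valid since the columns of $\bX$ are normalized, to absorb the remaining slack into the stated constant). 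Finally, in the regime of interest ($n$ large compared with $s_0\phi_{\max}(n)/\kappa(s_0,3)^2$) this bound is strictly below $n$, so $\min(|T|,n)=|T'|<n$ forces $|T'|=|T|=\|\htheta\|_0$, completing the proof.

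\textbf{Main obstacle.}
Everything except the last step is textbook Lasso analysis. The new observation is that the KKT identity holds coordinatewise on the active set, hence on \emph{every} sub-collection of active coordinates, so one may restrict to a sub-collection of size $\min(|T|,n)$; this is exactly what replaces $\phi_{\max}(p)$ by $\phi_{\max}(n)$, after which one must rule out $|T|>n$, which is automatic once $n$ dominates $s_0$. The remaining mild subtlety is quantifying $\prob(\cE)$ for a (random) subgaussian design, i.e., controlling $\max_i\|\bX_{\cdot,i}\|_2$ before taking the Gaussian union bound.
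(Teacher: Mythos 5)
Your proof is correct, and for the one genuinely non-routine point --- replacing $\phi_{\max}(p)$ by $\phi_{\max}(n)$ in \eqref{eq:Lasso-supp-size} --- you take a different route from the paper. Both arguments start from the same KKT identity on the active set $\hS=\supp(\htheta)$ and combine it with the prediction bound $\|\bX(\htheta-\theta_0)\|_2^2/n\le C\lambda^2 s_0/\kappa(s_0,3)^2$; the difference is how one justifies that only $\phi_{\max}$ over sets of size at most $n$ enters. The paper projects onto $\hS$ and then invokes the fact that $|\hS|\le n$ \emph{deterministically}, because the columns of $\bX$ are in general position (Tibshirani's uniqueness lemma), so $\phi_{\max}(|\hS|)\le\phi_{\max}(n)$ is immediate. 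You instead observe that the KKT identity $|\bX_{\cdot,i}^\sT(Y-\bX\htheta)|=n\lambda$ holds coordinatewise, restrict to an arbitrary subset $T'\subseteq\hS$ of size $\min(|\hS|,n)$, bound $|T'|$ by $O(\phi_{\max}(n)s_0/\kappa^2)$, and close the loop by noting that if this bound is below $n$ then $T'$ must be all of $\hS$. Your version is more self-contained (no appeal to general position, hence no implicit continuity assumption on the design) and in fact yields only one power of $\phi_{\max}(n)$ rather than two; its price is the self-consistency step, which requires $\phi_{\max}(n)s_0/\kappa(s_0,3)^2 = o(n)$ --- a condition not among the proposition's stated hypotheses, though it holds throughout the regime $n=\omega(s_0(\log p)^2)$ in which the proposition is used, and which you flag explicitly. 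For \eqref{eq:l2} the paper simply cites \cite{BickelEtAl}, whereas you reproduce the standard $J_2$-truncation argument; that part is routine and matches. (Both you and the paper are slightly cavalier about the constant in the event $\|\bX^\sT W\|_\infty\le n\lambda/2$ for $\lambda=\sigma\sqrt{2\log p/n}$; this is inherited from \cite{BickelEtAl} and affects only absolute constants.)
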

A proof of Eq.~\eqref{eq:Lasso-supp-size} is given in Appendix~\ref{app:Lasso-supp-size}.

Our next theorem controls the bias term $\Delta$.
%Namely, it states
%that $\Delta_i$ is small when averaged over small subsets of indices
%$i$. 
In order to state the result formally, for a vector $v\in\reals^m$, and
$k\le m$, we
define its $(\infty,k)$ norm as follows
\begin{align}
\|v\|_{(\infty,k)} \equiv \max_{A\subseteq [m], |A|\ge k}
\frac{\|v_A\|_2}{\sqrt{k}}\, .
\end{align}
For $k=1$, this is just the $\ell_{\infty}$ norm (the maximum entry)
of $v$. At the other extreme, for $k=m$, this is the rescaled $\ell_2$
norm. It is easy to see that $\|v\|_{(\infty,k)}$ is non-increasing in
$k$. As $k$ gets smaller, it gives us tighter control on
the individual entries of $v$.

The next theorem bounds $\|\Delta\|_{(\infty,k)}$ down to $k$ much
smaller than $s_0$.
\begin{thm}\label{thm:main_lem}
Consider the linear model~\eqref{eqn:regression} and let $\Sigma$ be the population covariance
matrix of the design $\bX$. Let $\Omega \equiv \Sigma^{-1}$ be the precision matrix and
suppose that an estimate $\hOmega$ is available, such that
$\|\hOmega- \Omega\|_\infty = o_\rP(1/\sqrt{\log p})$. 
Further, assume that $\sigma_{\min}(\Sigma)$ and $\sigma_{\max}(\Sigma)$ are respectively bounded from below
and above by some constants as $n\to \infty$. In addition, assume that
the rows of the whitened matrix $\bX \Omega^{1/2}$ are 
sub-gaussian, i.e., $\|\Omega^{1/2} X_1\|_{\psi_2} < C_1$, for some constant $C_1>0$. 

Let $\Delta \equiv \sqrt{n} (\hOmega \hSigma-\id) (\theta_0 -
\htheta)$ be the bias term in $\htheta^u$. Then for
any arbitrary (but fixed) constant $c >0$, there exists
$C = C(c, C_1,\sigma_{\max}(\Sigma),\sigma_{\min}(\Sigma))<\infty$
such that, 
\begin{eqnarray}\label{eq:Delta}
\|\Delta\|_{(\infty,cs_0)}^2 \le C\frac{\sigma^2 s_0 (\log p)^2}{n} + o_\rP(1)\,.
\end{eqnarray}
%
%Here, is a constant.
%
\end{thm}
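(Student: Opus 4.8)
The plan is to decompose $\Delta = \sqrt n(\hOmega\hSigma - \id)(\theta_0 - \htheta)$ by writing $\hOmega\hSigma - \id = (\hOmega - \Omega)\hSigma + (\Omega\hSigma - \id)$, so that $\Delta = \Delta_1 + \Delta_2$ with $\Delta_1 = \sqrt n(\hOmega - \Omega)\hSigma(\theta_0 - \htheta)$ and $\Delta_2 = \sqrt n(\Omega\hSigma - \id)(\theta_0 - \htheta)$. For $\Delta_1$, the crude bound $\|\Delta_1\|_\infty \le \sqrt n\,\|\hOmega - \Omega\|_\infty\,\|\hSigma(\theta_0 - \htheta)\|_\infty$ is not quite enough, so instead I would bound $\|\Delta_1\|_\infty \le \sqrt n\,\|\hOmega-\Omega\|_\infty\,\|\hSigma(\theta_0-\htheta)\|_1$ and then control $\|\hSigma(\theta_0-\htheta)\|_1$ using the RE-bound on $\|\htheta - \theta_0\|_1$ from Proposition~\ref{pro:Bickel} (which gives $\|\htheta-\theta_0\|_1 = O_\rP(s_0\sqrt{\log p/n})$) together with an operator-norm bound $\|\hSigma\|_1 \le \|\hSigma - \Sigma\|_1 + \|\Sigma\|_1$; on the event that $\bX$ is well-behaved, $\|\hSigma - \Sigma\|_\infty = O_\rP(\sqrt{\log p/n})$, and since both $\htheta - \theta_0$ and $\hSigma\hSigma$-type products have effective support of size $O(s_0)$ (by the sparsity bound~\eqref{eq:Lasso-supp-size}), the $\ell_1\to\ell_1$ bound loses only a factor $O(s_0)$. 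Pushing this through yields $\|\Delta_1\|_{(\infty,cs_0)} \le \|\Delta_1\|_\infty = \sqrt n \cdot o_\rP(1/\sqrt{\log p}) \cdot O_\rP(s_0\sqrt{\log p/n}) \cdot O_\rP(1) = o_\rP(1)$ after absorbing the $o_\rP$ factor — so $\Delta_1$ contributes only to the $o_\rP(1)$ term.

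The heart of the matter is $\Delta_2 = \sqrt n(\Omega\hSigma - \id)(\theta_0 - \htheta)$, where one cannot afford the full $\ell_\infty$ bound (that would give $(s_0\log p)^2/n$, the old rate). Here the $(\infty,k)$ norm with $k = cs_0$ is essential: I would bound $\|\Delta_2\|_{(\infty,cs_0)}$ by the variational definition, i.e. for any $A$ with $|A| \ge cs_0$, control $\|(\Delta_2)_A\|_2 / \sqrt{cs_0}$. Writing $u \equiv \theta_0 - \htheta$, which by~\eqref{eq:Lasso-supp-size} has support contained in a set $T$ of size $O(s_0)$, the quantity $(\Omega\hSigma - \id)u = \frac1n\Omega\bX^\sT\bX u - u$; restricted to coordinates $A$ and to the (essentially $s_0$-dimensional) direction $u$, this is a random-matrix fluctuation. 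The key estimate I would establish is a uniform (over sparse supports) bound of the form $\sup_{|T|\le Ks_0,\,|A|\ge cs_0}\frac{1}{\sqrt{cs_0}}\big\|\big(\frac1n\Omega\bX^\sT\bX - \id\big)_{A,T}\big\|_{2\to 2} \le C\sqrt{\frac{s_0\log p}{n}}$ with high probability; combining with $\|u\|_2^2 = O_\rP(s_0\log p/n)$ from~\eqref{eq:l2} then gives $\|\Delta_2\|_{(\infty,cs_0)}^2 \le n \cdot C\frac{s_0\log p}{n}\cdot O_\rP\!\big(\frac{s_0\log p}{n}\big) = O_\rP\!\big(\frac{s_0(\log p)^2}{n}\big)\cdot\sigma^2$, which is exactly~\eqref{eq:Delta}.

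To prove the uniform operator-norm bound, I would use the sub-gaussianity of the whitened rows $\Omega^{1/2}X_i$: the matrix $\frac1n\Omega\bX^\sT\bX - \id = \Omega^{1/2}\big(\frac1n G^\sT G - \id\big)\Omega^{1/2}$ where $G = \bX\Omega^{1/2}$ has i.i.d. isotropic sub-gaussian rows, so the well-conditioning of $\Sigma$ reduces the problem to bounding $\|(\frac1n G^\sT G - \id)_{A,T}\|_{2\to2}$ uniformly over $|A|,|T| = O(s_0)$. This is a standard sub-gaussian restricted-isometry / matrix-deviation bound (via an $\varepsilon$-net over the union of sparse coordinate subspaces, whose metric entropy contributes the $\log p$ factor through $\log\binom{p}{s_0} \asymp s_0\log p$), giving the $\sqrt{s_0\log p/n}$ rate. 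The main obstacle — and the place the new rate is won — is precisely this step: carrying the $(\infty,k)$ norm through the argument so that the deviation bound is applied on an $O(s_0)\times O(s_0)$ block rather than on all $p$ coordinates, which turns a factor $(s_0\log p)^2/n$ into $s_0(\log p)^2/n$. I would also need to verify carefully that the events from Proposition~\ref{pro:Bickel} (support size, $\ell_2$ and $\ell_1$ error) and from the sub-gaussian concentration hold simultaneously with high probability, and that conditioning on $\bX$ to use these deterministic-looking bounds is consistent with the $o_\rP$ statements; this bookkeeping is routine but must be done with care since $\htheta$ and $\bX$ are dependent.
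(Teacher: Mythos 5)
Your analysis of the term $\sqrt{n}(\Omega\hSigma-\id)(\theta_0-\htheta)$ is essentially the paper's argument: restrict to an $A\times T$ block with $|T|=O(s_0)$ via \eqref{eq:Lasso-supp-size}, bound the block operator norm by $C\sqrt{s_0\log p/n}$ using sub-exponential Bernstein concentration plus an $\varepsilon$-net and a union bound over the $p^{O(s_0)}$ possible supports, and multiply by $\|\htheta-\theta_0\|_2$ from \eqref{eq:l2}. Two minor caveats there: the whitening identity is a conjugation, $\Omega\hSigma-\id=\Omega^{1/2}\big(\tfrac{1}{n}G^\sT G-\id\big)\Omega^{-1/2}$ with $G=\bX\Omega^{1/2}$, not a symmetric product, so block restriction does not commute with the whitening and the net/Bernstein argument has to be run directly on the non-isotropic quadratic forms $\<u,\Omega X_i\>\<X_i,v\>$ (as the paper does); and for sets $A$ much larger than $s_0$ you need the paper's device of partitioning $A$ into pieces of size $\Theta(s_0)$ and normalizing by $|A|$, since the block operator-norm bound degrades as $|A|$ grows.

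The genuine gap is in your treatment of $\Delta^{(2)}=\sqrt{n}(\hOmega-\Omega)\hSigma(\theta_0-\htheta)$. Your own arithmetic does not give $o_\rP(1)$: $\sqrt{n}\cdot o_\rP(1/\sqrt{\log p})\cdot O_\rP(s_0\sqrt{\log p/n})=o_\rP(s_0)$, and $o_\rP(s_0)$ cannot be ``absorbed''---its square can dominate $s_0(\log p)^2/n$ in the regime $n=\omega(s_0(\log p)^2)$ with $s_0\to\infty$. Moreover, the intermediate claim that $\|\hSigma(\theta_0-\htheta)\|_1$ costs only an $O(1)$ factor beyond $\|\theta_0-\htheta\|_1$ is unjustified: $\hSigma(\theta_0-\htheta)$ is a dense vector (generically all $p$ entries are of order $\sqrt{\log p/n}$), so its $\ell_1$ norm is of order $p\sqrt{\log p/n}$; the ``effective support $O(s_0)$'' heuristic applies to $\theta_0-\htheta$, not to $\hSigma(\theta_0-\htheta)$. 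The missing idea is the Lasso stationarity (KKT) condition, $\hSigma(\htheta-\theta_0)=\bX^\sT W/n+\lambda v(\htheta)$ with $v(\htheta)\in\partial\|\htheta\|_1$, which yields $\|\hSigma(\theta_0-\htheta)\|_\infty=O_\rP(\sqrt{\log p/n})$ with no factor of $s_0$. Pairing this, via H\"older, with the hypothesis on $\|\hOmega-\Omega\|_\infty$ read as the $\ell_\infty$ \emph{operator} norm (i.e.\ $\max_i\|\hOmega_{i,\cdot}-\Omega_{i,\cdot}\|_1=o_\rP(1/\sqrt{\log p})$, which is what the theorem assumes---you appear to have read it as the entrywise maximum, which is what forces the lossy $\ell_1$ pairing) gives $|\Delta^{(2)}_i|\le\sqrt{n}\,\|\hOmega_{i,\cdot}-\Omega_{i,\cdot}\|_1\,\|\hSigma(\theta_0-\htheta)\|_\infty=o_\rP(1)$ uniformly in $i$, which is exactly the paper's bound \eqref{eq:Delta2} and closes the argument.
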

The proof is deferred to Section~\ref{proof:main_lem}.

Using Markov inequality, this implies that there cannot be many entries
of $\Delta$ that are large.
\begin{coro}\label{cor:main}
Under the conditions of Theorem~\ref{thm:main_lem}, and for $n = \omega(s_0(\log p)^2)$, there are at most
$o(s_0)$ entries of $\Delta$ that are of order $\Omega(1)$. More
precisely, fix arbitrary $\eps >0$,
and define 
$\cC_n(\eps) \equiv \{i\in[p]:\, |\Delta_i| > \eps\}$. Then the
following limit holds in probability
\[
\underset{n\to \infty}{\lim} \frac{1}{s_0} |\cC_n(\eps)| = 0\,. 
\]
\end{coro}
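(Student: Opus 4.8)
The plan is to deduce Corollary~\ref{cor:main} directly from the $(\infty,k)$-norm bound of Theorem~\ref{thm:main_lem} together with a counting argument. First I would fix $\eps>0$ and observe that for the corollary to be interesting we only need to show $|\cC_n(\eps)|=o(s_0)$, so it suffices to prove that for any fixed $\delta>0$ we have $|\cC_n(\eps)| \le \delta s_0$ with probability tending to $1$. The natural choice is to apply Theorem~\ref{thm:main_lem} with the constant $c = \delta$ (any fixed positive constant is allowed), obtaining a finite $C=C(\delta,\dots)$ with
\[
\|\Delta\|_{(\infty,\delta s_0)}^2 \le C\,\frac{\sigma^2 s_0 (\log p)^2}{n} + o_\rP(1)\,.
\]

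Next I would unwind the definition of the $(\infty,k)$-norm. Suppose, toward a contradiction on the relevant event, that $|\cC_n(\eps)| \ge \delta s_0$; pick any subset $A \subseteq \cC_n(\eps)$ with $|A| = \lceil \delta s_0\rceil \ge \delta s_0$ (here I am implicitly using $\delta s_0 \le p$, which holds for $n$ large since $s_0\le p$). Since every coordinate in $A$ satisfies $|\Delta_i|>\eps$, we get $\|\Delta_A\|_2^2 > \eps^2 |A| \ge \eps^2 \delta s_0$, hence
\[
\|\Delta\|_{(\infty,\delta s_0)}^2 \ge \frac{\|\Delta_A\|_2^2}{\delta s_0} > \eps^2\,.
\]
Combining the two displays, the event $\{|\cC_n(\eps)| \ge \delta s_0\}$ is contained in the event $\{\eps^2 < C \sigma^2 s_0 (\log p)^2/n + o_\rP(1)\}$. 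Since $n = \omega(s_0 (\log p)^2)$, the deterministic term $C\sigma^2 s_0(\log p)^2/n \to 0$, and together with the $o_\rP(1)$ term the whole right-hand side converges to $0$ in probability, so this event has probability $o(1)$ while $\eps^2$ is a fixed positive constant. Therefore $\prob(|\cC_n(\eps)| \ge \delta s_0) \to 0$, i.e., $|\cC_n(\eps)|/s_0 \to 0$ in probability, which is exactly the claim.

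There is essentially no serious obstacle here: the content is entirely in Theorem~\ref{thm:main_lem}, and the corollary is a one-line consequence of the monotonicity/definition of $\|\cdot\|_{(\infty,k)}$ applied at level $k=\delta s_0$. The only points requiring a modicum of care are (i) making sure the constant $c$ in Theorem~\ref{thm:main_lem} is allowed to be taken arbitrarily small (the statement says "arbitrary but fixed $c>0$", so this is fine, at the cost of $C$ blowing up as $c\to 0$, which does not matter since $\delta$ is fixed before we take $n\to\infty$), (ii) handling the $o_\rP(1)$ additive term jointly with the vanishing deterministic term when passing to probabilities, and (iii) the trivial point that $\lceil \delta s_0 \rceil \le p$ for $n$ large. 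The Markov-inequality remark in the text is just an informal gloss; the clean argument is the direct counting bound above.
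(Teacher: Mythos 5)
Your proof is correct and follows essentially the same route as the paper: both compare the lower bound $\eps^2$ forced on $\|\Delta\|_{(\infty,c s_0)}^2$ by the coordinates of $\cC_n(\eps)$ (when $|\cC_n(\eps)|\ge c s_0$) against the vanishing upper bound $C\sigma^2 s_0(\log p)^2/n + o_\rP(1)$ of Theorem~\ref{thm:main_lem}, the paper phrasing this as a one-line contradiction applied to the set $\cC_n(\eps)$ itself. Your version merely makes explicit the quantifier bookkeeping (fixing $\delta$, bounding $\prob(|\cC_n(\eps)|\ge\delta s_0)$) that the paper's terse argument leaves implicit.
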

\begin{proof}
If the claim does not hold true, then by applying Theorem~\ref{thm:main_lem} to the set $\cC_n(\eps)$, we 
have
\[
\eps^2 \le \frac{\|\Delta_{\cC_n(\eps)}\|_2^2}{|\cC_n(\eps)|} \le C \frac{\sigma^2 s_0 (\log p)^2}{n} + o_\rP(1) = o_\rP(1)\,,
\]
which is contradiction.
\end{proof}
In other words, except for at most $o(s_0)$ entries of $\theta_0$, $\htheta^u_i$
is an asymptotically unbiased estimator for $\theta_{0,i}$. 
%
%====================================================================
%
\section{Constructing $p$-values and hypothesis testing}
For the linear model~\eqref{eqn:regression}, we are interested in testing the individual hypotheses
$H_{0,i}:\, \theta_{0,i} = 0$, and assigning $p$-values
for these tests. 

Similar to~\cite{confidenceJM}, we construct a $p$-value $P_i$ for the test $H_{0,i}$ as follows:
\begin{eqnarray}\label{eq:p-value}
P_i = 2\Big(1-\Phi\Big(\frac{\sqrt{n} |\htheta^u_i|}{\hsigma [\hOmega \hSigma \hOmega^\sT]_{ii}^{1/2}}\Big) \Big)\,.
\end{eqnarray}
where $\hsigma$ is a consistent estimator of $\sigma$. 
For instance, we can use the scaled Lasso~\cite{SZ-scaledLasso}
(see also related work in \cite{belloni2011square}) as
\[
\{\htheta, \hsigma\} \equiv \underset{\theta\in \reals^p, \sigma >0}{\arg\min} 
\Big\{\frac{1}{2\sigma n} \|Y- \bX \theta\|_2^2 + \frac{\sigma}{2} + \lambda \|\theta\|_1 \Big\}\,.
\]
Choosing $\lambda = O(\sqrt{(\log p)/n})$ yields a consistent estimate
$\hsigma$ of $\sigma$.

A different estimator of $\sigma$ can be constructed using Proposition
\ref{pro:main_thm} and Theorem \ref{thm:main_lem}, and is described in
Section~\ref{sec:SigmaEstimator}.

The decision rule is then based on the $p$-value $P_i$:
\begin{eqnarray}
\begin{split}\label{eq:decision-rule}
T_{i,\bX}(y) = \begin{cases}
1 & \text{if } P_i \le \alpha \quad \quad \text{ (reject $H_{0,i}$)}\,,\\
0 & \text{otherwise} \quad\quad \text{(accept $H_{0,i}$)}\,.
\end{cases}
\end{split}
\end{eqnarray} 
We measure the quality of the test $T_{i,\bX}(y)$ in terms of its
significance level $\alpha_i$ 
 and statistical power $1-\beta_i$. Here $\alpha_i$ is the probability
 of type I error (i.e., of a false positive at $i$)
 and $\beta_i$ is the probability of type II error (i.e., of a false
 negative at $i$).

Our next theorem characterizes the tradeoff between type I error and the average power attained by the decision rule~\eqref{eq:decision-rule}.
 Note that this tradeoff depends on the magnitude
of the non-zero coefficients $\theta_{0,i}$. The larger they are, the easier one can
distinguish between null hypotheses from their alternatives.
 We refer to Section~\ref{proof:error-power} for a proof.
\begin{thm}\label{thm:error-power}
Consider a random design model that satisfies the conditions of Theorem~\ref{thm:main_lem}.
For $\theta_0 \in \reals^p$, let $S \equiv \{i\in [p]:\, \theta_{0,i}
\neq 0\}$. Assume that $\hOmega$ is such that
$\|\hOmega-\Omega\|_{\infty} =o(1/\sqrt{\log p})$ with high probability.
Under the sample size assumption $n = \omega (s_0 (\log p)^2)$, the following holds true:
\begin{align}
&\underset{n \to \infty}{\lim\sup}\, \frac{1}{p - s_0} \sum_{i\in S^c} \prob_{\theta_0} (T_{i,\bX}(y)=1) \le \alpha\,.\label{eq:typeI}\\
&\underset{n \to \infty}{\lim\inf}\, \frac{1}{1-\beta^*(\theta_0;n)} \Big\{ \frac{1}{s_0} \sum_{i\in S} \prob_{\theta_0} (T_{i,\bX}(y)=1) \Big\} \ge 1\,, \label{eq:power}\\
& 1- \beta^*(\theta_0;n) \equiv \frac{1}{s_0} \sum_{i\in S} G\bigg(\alpha,\frac{\sqrt{n}\,|\theta_{0,i}|}{\sigma \sqrt{\Omega_{ii}}}\bigg)\,, \label{eq:beta}
\end{align}
where, for $\alpha \in [0,1]$ and $u \in \reals_+$, the function $G(\alpha,u)$ is defined as follows:
\begin{eqnarray}\label{Eq:Gfun}
G(\alpha,u) = 2 - \Phi(\Phi^{-1}(1-\frac{\alpha}{2}) + u) - \Phi(\Phi^{-1}(1-\frac{\alpha}{2}) - u)\,. 
\end{eqnarray}
Furthermore, $\prob_{\theta_0}(\cdot)$ is the induced probability for random design $\bX$ and noise
realization $w$, given the fixed parameter vector $\theta_0$.
\end{thm}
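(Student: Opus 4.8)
The plan is to combine the residual decomposition of Proposition~\ref{pro:main_thm} with the bias control from Theorem~\ref{thm:main_lem}, and then argue coordinate by coordinate while carefully handling the $o(s_0)$ ``bad'' coordinates identified in Corollary~\ref{cor:main}. Concretely, write $\sqrt{n}(\htheta^u_i - \theta_{0,i}) = Z_i + \Delta_i$, where conditionally on $\bX$ the vector $Z$ is Gaussian with covariance $\sigma^2 \hOmega\hSigma\hOmega^\sT$. Under the stated assumptions one first shows that $[\hOmega\hSigma\hOmega^\sT]_{ii} \to \Omega_{ii}$ in probability (uniformly enough over $i$), using $\|\hOmega-\Omega\|_\infty = o(1/\sqrt{\log p})$, the concentration of $\hSigma$ around $\Sigma$, and the fact that $\Sigma$ has bounded spectrum; similarly $\hsigma \to \sigma$. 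So the normalization in the $p$-value~\eqref{eq:p-value} is asymptotically the ``right'' one, $\sigma\sqrt{\Omega_{ii}}$.

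For the type I error bound~\eqref{eq:typeI}, I would fix $i \in S^c$, so $\theta_{0,i}=0$ and $\sqrt{n}\htheta^u_i = Z_i + \Delta_i$. If $\Delta_i$ were exactly zero, $P_i$ would be stochastically larger than a uniform random variable (up to the vanishing normalization error), giving $\prob_{\theta_0}(T_{i,\bX}=1)\le \alpha + o(1)$ for each such $i$. The subtlety is that $\Delta_i$ need not be negligible for every $i$. Here is where one averages: by Corollary~\ref{cor:main} applied with any fixed $\eps$, the number of coordinates with $|\Delta_i|>\eps$ is $o(s_0) = o(p-s_0)$ (since $s_0 < p$ and typically $s_0 = o(p)$; more carefully one uses $p - s_0 \ge p/2$ say, or at least $p-s_0 \to\infty$ fast enough that $o(s_0)/(p-s_0)\to 0$ — this needs a word of justification, or the mild implicit assumption $s_0 = o(p)$). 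For the remaining $i\in S^c$ with $|\Delta_i|\le \eps$, the argument $\sqrt n|\htheta^u_i|/(\hsigma[\hOmega\hSigma\hOmega^\sT]^{1/2}_{ii})$ is within $o(1)$ of $|Z_i|/(\sigma\sqrt{\Omega_{ii}})$ plus a term of size $\eps/(\sigma\sqrt{\Omega_{ii}})$, and $Z_i/(\sigma\sqrt{\Omega_{ii}})$ is asymptotically standard normal; so the expected fraction rejected is at most $\alpha + (\text{something} \to 0 \text{ as } \eps\to 0) + o(1)$. Taking $\eps \to 0$ after $n\to\infty$ closes~\eqref{eq:typeI}. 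One must be slightly careful that the $Z_i$ are only conditionally Gaussian given $\bX$ and that the ``bad set'' $\cC_n(\eps)$ is random and depends on $Y$ as well; the clean way is to bound $\frac{1}{p-s_0}\sum_{i\in S^c}\prob_{\theta_0}(T_{i}=1) \le \frac{1}{p-s_0}\E|\cC_n(\eps)| + \frac{1}{p-s_0}\sum_{i\in S^c}\prob(|Z_i|/(\sigma\sqrt{\Omega_{ii}}) \ge \Phi^{-1}(1-\alpha/2) - \eps') + o(1)$ after replacing normalizations, and then send $n\to\infty$ and $\eps\to 0$.

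For the power bound~\eqref{eq:power}--\eqref{eq:beta}, I would run the analogous computation for $i\in S$. For a fixed $i\in S$ with $|\Delta_i|\le\eps$, $\sqrt n\htheta^u_i = \sqrt n\,\theta_{0,i} + Z_i + \Delta_i$, and the rejection probability is $\prob(|\sqrt n\,\theta_{0,i} + Z_i + \Delta_i| \ge \hsigma[\hOmega\hSigma\hOmega^\sT]^{1/2}_{ii}\Phi^{-1}(1-\alpha/2))$; using the Gaussian CDF and the convergence of the normalizations, this is, up to $o(1)$ and an $\eps$-perturbation, equal to $G(\alpha, \sqrt n|\theta_{0,i}|/(\sigma\sqrt{\Omega_{ii}}))$ where $G$ is as in~\eqref{Eq:Gfun} (this is a direct computation: $\prob(|\mu + N| \ge t) = 2 - \Phi(t-\mu) - \Phi(t+\mu)$ for $N$ standard normal). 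Averaging over $i\in S$, subtracting the $o(s_0)$ bad coordinates (for which the rejection probability is trivially $\ge 0$, so they only cost us $o(1)$ in the normalized sum since we divide by $s_0$), and using that $G$ is uniformly continuous, yields $\frac{1}{s_0}\sum_{i\in S}\prob(T_i=1) \ge (1-\beta^*(\theta_0;n)) - o(1) - (\text{error}\to 0 \text{ as }\eps\to 0)$. Dividing by $1-\beta^*(\theta_0;n)$ and taking $\liminf$ gives~\eqref{eq:power}, provided $1-\beta^*$ is bounded away from $0$ along the relevant sequence (if it is not, the statement is vacuous or needs the convention that the ratio is $\ge 1$; this is a minor point to flag). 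The main obstacle throughout is bookkeeping: the ``bad'' coordinate set is random and correlated with the Gaussian part, the quantities $[\hOmega\hSigma\hOmega^\sT]_{ii}$, $\hsigma$, and $\cC_n(\eps)$ all need to be handled jointly, and one has to order the limits ($n\to\infty$ then $\eps\to 0$) and the ``with high probability'' qualifiers correctly so that nothing is double-counted; the probabilistic content is entirely in Proposition~\ref{pro:main_thm}, Theorem~\ref{thm:main_lem}, and Corollary~\ref{cor:main}, which do the heavy lifting.
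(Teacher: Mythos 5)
Your proposal is correct and follows essentially the same route as the paper's proof: the decomposition of Proposition~\ref{pro:main_thm}, the bias control of Theorem~\ref{thm:main_lem} and Corollary~\ref{cor:main} to discard the $o(s_0)$ bad coordinates, the convergence $[\hOmega\hSigma\hOmega^\sT]_{ii}\to\Omega_{ii}$ (which the paper isolates as Lemma~\ref{lem:BoundVariance}), and the Gaussian tail computation giving $G(\alpha,u)$, with limits taken in the same order ($n\to\infty$, then the perturbation parameters to zero). The points you flag (the ratio $|\cC_n(\eps)|/(p-s_0)$ and the positivity of $1-\beta^*$, which follows from $G(\alpha,u)\ge G(\alpha,0)=\alpha$) are handled at the same level of detail in the paper.
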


In Fig.~\ref{fig:G}, function $G(\alpha, u)$ is plotted versus $\alpha$, for several values of $u$.
It is easy to see that, for any $\alpha \in (0,1)$, $u \mapsto G(\alpha,u)$ is monotone increasing. 
Suppose that $\min_{i\in S} |\theta_{0,i}| \ge \mu$.  Then, by Eq.~\eqref{eq:beta}, we have
$$ 1-\beta^*(\theta_0,n) \ge \frac{1}{s_0} \sum_{i\in S} G\Big(\alpha, \frac{\sqrt{n} \mu}{\sigma \sqrt{\Omega_{ii}}}\Big)\,.$$
Notice that $G(\alpha,0) = \alpha$, giving the lower bound $\alpha$ for the power at $\mu = 0$.
In fact without any assumption on the non-zero coordinates of $\theta_0$, one can take $\theta_{0,i} \neq 0$ arbitrarily
close to zero, and practically $H_{0,i}$ becomes indistinguishable from its alternative. 
In this case, no decision rule can outperform random guessing, i.e, randomly rejecting $H_{0,i}$ with probability $\alpha$.
This yields the trivial power $\alpha$.

\begin{figure}
\centering
\includegraphics[viewport = 0 160 600 640, width = 4.2in]{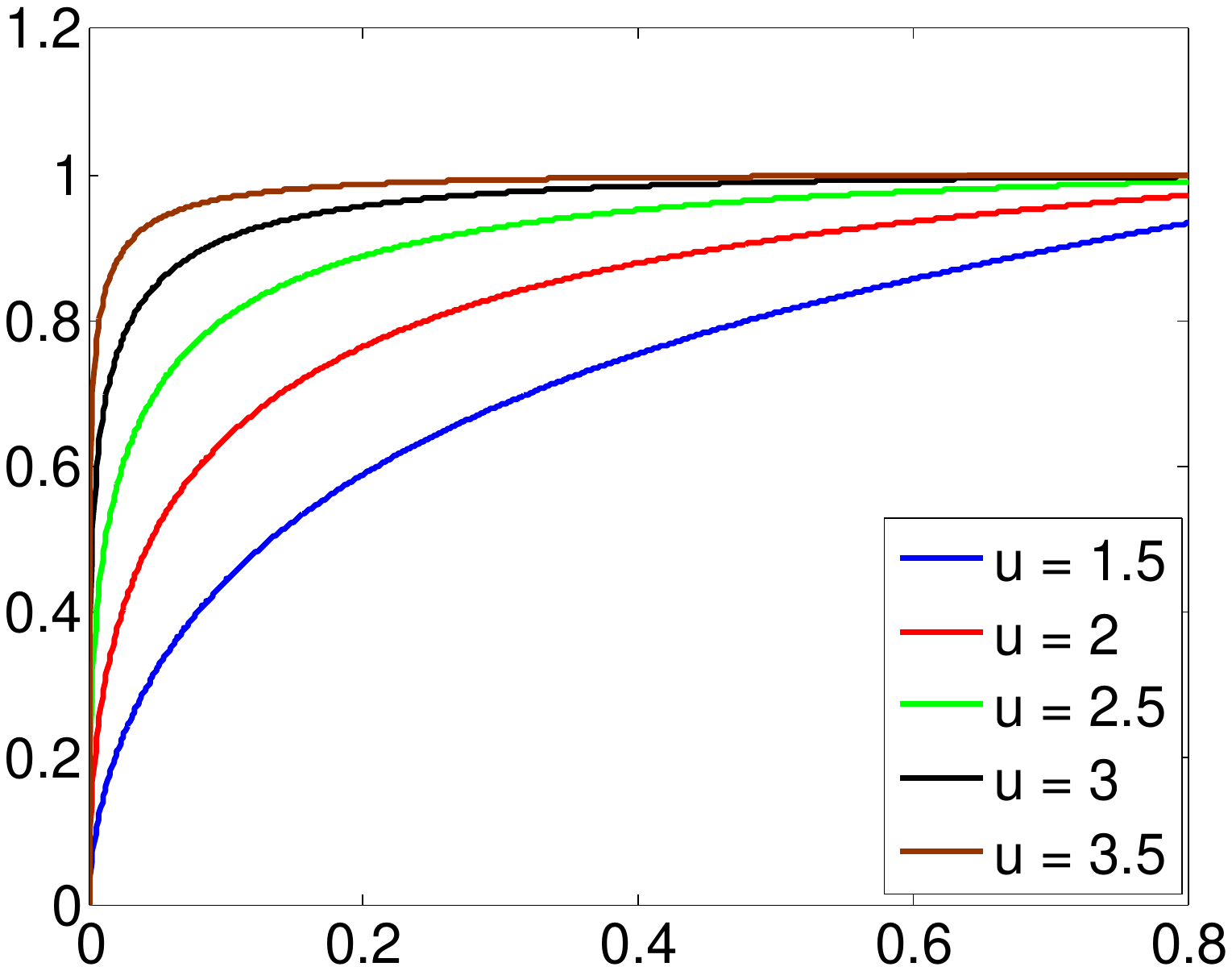}
\put(-170,10){{\small{\sf Type I error }($\alpha$)}}
\put(-275,100){\rotatebox{90}{$G(\alpha,u)$}}
\caption{Function $G(\alpha,u)$ versus $\alpha$ for several values of $u$.}\label{fig:G}
\end{figure}
%
%==========================================================================
\subsection{Minimax optimality of the average power}
An upper bound for the minimax power of tests, with a given significant level $\alpha$,
is provided in~\cite{javanmard2013hypothesis} for sparse linear regression with Gaussian designs.
Considering sample size scaling $n =\omega(s_0 (\log p)^2)$, 
the minimax bound~\cite[Theorem 2.6]{javanmard2013hypothesis} simplifies to the following bound for the optimal average
power
\begin{eqnarray}
\lim_{n\to \infty} \frac{1-\beta^{{\rm
      opt}}(\alpha;\mu)}{G(\alpha,\mu/\sigma_\eff)}\le 1\, ,\quad
\sigma_\eff = \frac{\sigma}{\sqrt{n \eta_{\Sigma,s_0}}}\,, \label{eq:optimal-power}
\end{eqnarray}
where 
\begin{align*}
\eta_{\Sigma,s_0} &\equiv \max_{i\in[p]}\min_{S} \Big\{\Sigma_{i|S}:\, S\subseteq[p]\backslash \{i\}, |S| < s_0 \Big\}\,,\\
\Sigma_{i|S} &\equiv \Sigma_{ii} - \Sigma_{i,S}(\Sigma_{S,S})^{-1}\Sigma_{S,i}\,.
\end{align*}
We compare our test to the optimal test by computing how much $\mu$ must be increased to
achieve the minimax optimal average power. It follows from Eqs.~\eqref{eq:power} and~\eqref{eq:beta} that
$\mu$ must be increased to $\tilde{\mu}$, with the increase factor 
\[
\frac{\tilde{\mu}}{\mu} = \Big(\max_{i\in [p]} \sqrt{\Omega_{ii}}\Big) \sqrt{\eta_{\Sigma,s_0}}
\le \max_{i\in [p]} \sqrt{\Omega_{ii} \Sigma_{ii}} \le \sqrt{\frac{\sigma_{\max}(\Sigma)}{\sigma_{\min}(\Sigma)}}\,.
\]

Therefore, our test has nearly optimal average power for
well-conditioned covariances and sample size scaling $n = \omega(s_0 (\log p)^2)$.

%===============================================================

\section{An estimator of the noise level}
\label{sec:SigmaEstimator}
In this section we describe a consistent estimator $\hsigma$ of the
noise standard deviation $\sigma$, that is based on Proposition
\ref{pro:main_thm} and Theorem \ref{thm:main_lem}.
After constructing $\htheta^{u}$ as per Eq.~\eqref{eq:hthetau}, we
let
\begin{align}
z_i\equiv \frac{\sqrt{n}\,
  \htheta^u_i}{[\hOmega\hSigma\hOmega^\sT]^{1/2}_{ii}}\, .
\end{align}
According to Proposition
\ref{pro:main_thm} and Theorem \ref{thm:main_lem}, the entries $z_i$,
$i\not\in S$ are approximately Gaussian, with mean $0$ and variance
$\sigma^2$. This suggests to use the following robust estimator (a
similar approach in a related context was proposed in \cite{DMM09}). 

Let $|z|$ be the vector of absolute values of $z$, i.e. 
$|z| = (|z_1|,|z_2|,\dots,|z_p|)$, and denote by $|z|_{(i)}$ its
$i$-th entry in order of magnitude: $|z|_{(1)}\le |z|_{(2)}\le \dots
\le |z|_{(p)}$. We then set\footnote{More generally, for $\alpha\in
  (0,1)$, we can use $\hsigma_{\alpha} \equiv |z|_{(p\alpha)}/\Phi^{-1}((1+\alpha)/2)$.}
\begin{align}
\hsigma = \frac{|z|_{(p/2)}}{\Phi^{-1}(3/4)}
\end{align}
The next corollary is an immediate consequence of Proposition
\ref{pro:main_thm} and Theorem \ref{thm:main_lem} (see also Lemma \ref{lem:BoundVariance}).
\begin{coro}
Under the assumptions of Theorem \ref{thm:main_lem}, if further
$s_0= o(p)$, we have $\hsigma\to \sigma$ in probability.
\end{coro}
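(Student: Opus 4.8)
The plan is to show that $\hsigma = |z|_{(p/2)}/\Phi^{-1}(3/4)$ converges to $\sigma$ by a standard empirical-quantile argument, using the fact established in Proposition~\ref{pro:main_thm} and Theorem~\ref{thm:main_lem} that the $z_i$ are approximately $\normal(0,\sigma^2)$ for all but a vanishing fraction of coordinates. First I would write $z_i = (Z_i + \Delta_i)/[\hOmega\hSigma\hOmega^\sT]_{ii}^{1/2}$, where $Z_i \mid \bX \sim \normal(0,\sigma^2[\hOmega\hSigma\hOmega^\sT]_{ii})$, so that $Z_i/[\hOmega\hSigma\hOmega^\sT]_{ii}^{1/2} \sim \normal(0,\sigma^2)$ exactly (conditionally on $\bX$). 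The variance normalization $[\hOmega\hSigma\hOmega^\sT]_{ii}$ is positive and bounded away from $0$ and $\infty$ under the hypotheses (this is the content of the cited Lemma~\ref{lem:BoundVariance}, which I would invoke), so dividing by it is harmless. Thus $z_i$ equals a standard-$\sigma$ Gaussian plus the perturbation $\Delta_i/[\hOmega\hSigma\hOmega^\sT]_{ii}^{1/2}$.

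Next I would control the perturbation. By Corollary~\ref{cor:main}, for any fixed $\eps>0$ the number of indices with $|\Delta_i|>\eps$ is $o_\rP(s_0)$, and since $s_0 = o(p)$ this is $o_\rP(p)$. Hence, outside a set of coordinates of size $o_\rP(p)$, we have $|z_i - \tilde z_i| \le \eps'$ where $\tilde z_i$ is the pure Gaussian part (after absorbing the bounded normalization into $\eps'$). Now I would compare the empirical distribution function of $|z|$ with that of $|\tilde z|$: they differ in CDF at every point by at most $o_\rP(1)$ coming from the $o_\rP(p)$ bad coordinates, plus an $O(\eps')$ shift from the $\eps'$-perturbation on the good coordinates and the Lipschitz continuity of the Gaussian CDF. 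Meanwhile, by the Glivenko–Cantelli theorem (or simply a union bound over a grid using the conditional-on-$\bX$ independence of the $\tilde z_i$, which hold since the rows of $\bX$ are i.i.d.), the empirical CDF of $|\tilde z|$ converges uniformly to $\Phi(\cdot/\sigma)$-based half-normal CDF, namely $t\mapsto 2\Phi(t/\sigma)-1$. Therefore the empirical CDF of $|z|$ evaluated at any fixed $t$ converges in probability to $2\Phi(t/\sigma)-1$.

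Finally I would convert this uniform-CDF statement into convergence of the median quantile. The target $2\Phi(t/\sigma)-1$ is continuous and strictly increasing in $t$, so its inverse at level $1/2$ is the unique point $t^\star = \sigma\,\Phi^{-1}(3/4)$. Standard quantile-continuity (e.g. the argument that if $F_n\to F$ uniformly and $F$ is continuous and strictly increasing at the relevant level, then $F_n^{-1}(1/2)\to F^{-1}(1/2)$) gives $|z|_{(p/2)} = \hat F_n^{-1}(1/2) \to \sigma\,\Phi^{-1}(3/4)$ in probability, and dividing by $\Phi^{-1}(3/4)$ yields $\hsigma\to\sigma$. I expect the main obstacle to be bookkeeping the two sources of error in the CDF comparison — the $o_\rP(p)$ contaminated coordinates and the $\eps'$-level Gaussian perturbation on the rest — and making the $\eps' \to 0$ limit rigorous (a diagonal/subsequence argument, letting $\eps'\downarrow 0$ slowly), together with verifying that the conditional (on $\bX$) Gaussian structure of $Z$ is enough to run Glivenko–Cantelli; the rest is routine.
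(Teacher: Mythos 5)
Your overall plan---reduce to the median of $|z|$ over coordinates where $z_i$ is nearly $\normal(0,\sigma^2)$, discard an $o_\rP(p)$ contaminated set, and finish by quantile continuity---is the natural fleshing-out of what the paper leaves implicit (the paper gives no detailed argument, calling the corollary an immediate consequence of Proposition~\ref{pro:main_thm}, Theorem~\ref{thm:main_lem} and Lemma~\ref{lem:BoundVariance}). However, one step is justified incorrectly, and it is the crux of your argument: the normalized coordinates $\tilde{z}_i = Z_i/[\hOmega\hSigma\hOmega^\sT]_{ii}^{1/2}$ are \emph{not} independent conditionally on $\bX$. Conditionally on $\bX$ we have $Z=\hOmega\bX^\sT W/\sqrt{n}$, a linear image of the $n$-dimensional noise, with covariance $\sigma^2\hOmega\hSigma\hOmega^\sT$; this matrix is not diagonal (it has rank at most $n<p$, while its diagonal entries are bounded away from zero by Lemma~\ref{lem:BoundVariance}), and i.i.d.-ness of the rows of $\bX$ is irrelevant here. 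So the Glivenko--Cantelli/union-bound step does not follow as stated. Note also that a first-moment (Markov) bound on the number of exceedances of a threshold cannot pin the median at $\sigma\Phi^{-1}(3/4)$: you need two-sided concentration of the empirical CDF at the test points $\sigma\Phi^{-1}(3/4)\pm\delta$. The gap is repairable by a second-moment argument: for jointly Gaussian pairs the covariance of the indicators $\ind(|\tilde{z}_i|\le t)$ and $\ind(|\tilde{z}_j|\le t)$ is controlled by their correlation, and the average absolute correlation can be shown to vanish, e.g.\ since $\frac{1}{p^2}\sum_{i\neq j}|\Omega_{ij}|\le \|\Omega\|_2/\sqrt{p}$ by Cauchy--Schwarz together with entrywise control of $\hOmega\hSigma\hOmega^\sT-\Omega$ in the spirit of Lemma~\ref{lem:BoundVariance}; but this is a genuine argument you must supply, not a one-line appeal to independence.

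A smaller bookkeeping issue: the decomposition $z_i=(Z_i+\Delta_i)/[\hOmega\hSigma\hOmega^\sT]_{ii}^{1/2}$ only holds for $i\notin S$; for $i\in S$ there is the additional term $\sqrt{n}\,\theta_{0,i}/[\hOmega\hSigma\hOmega^\sT]_{ii}^{1/2}$, which need not be small. These $s_0$ coordinates must be placed in the contaminated set together with $\cC_n(\eps)$; this, rather than the bound $|\cC_n(\eps)|=o_\rP(s_0)$ (which is $o_\rP(p)$ automatically since $s_0\le p$), is where the hypothesis $s_0=o(p)$ is actually needed. Once $S\cup\cC_n(\eps)$ of size $o_\rP(p)$ is excluded and the correlated-Gaussian concentration is supplied, your quantile-continuity conclusion goes through.
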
 

%===============================================================

\section{Designs with sparse precision matrix}

In Theorem~\ref{thm:main_lem} we posit existence of an estimator
$\hOmega$ for the precision matrix $\Omega$, such that $\|\hOmega - \Omega\|_\infty = o(1/\sqrt{\log p})$. 
In case the precision matrix is sparse enough, then~\cite{GBR-hypothesis} constructs such an estimator using 
the Lasso for the nodewise regression on the design $\bX$. Formally, for $j \in [p]$, let
$$\hat{\gamma}_j = \underset{\gamma}{\arg\min}\, \frac{1}{2n} \|X_j - \bX_{-j}\gamma\|_2^2 + \lambda \|\gamma\|_1\,,$$  
where $\bX_{-j}$ is the sub-matrix obtained by removing the $j^{th}$ column. Also let
$$ \widehat{C} = 
\begin{bmatrix}
1 & -\hgamma_{1,2} & \cdots & -\hgamma_{1,p}\\
-\hgamma_{2,1} & 1 & \cdots & -\hgamma_{1,p}\\
\vdots & \vdots & \ddots & \vdots\\
-\hgamma_{p,1} & -\hgamma_{p,2} & \cdots &1  
\end{bmatrix}\,,
$$
with $\hgamma_{j,k}$ being the $k$-th entry of $\hgamma_j$, and let  
$$ \widehat{T}^2 = \diag(\htau_1^2,\dotsc,\htau_p^2),\quad \htau_j^2 = (X_j - \bX_{-j} \hgamma_j)^\sT X_j/n\,.$$ 
Then define $\hOmega = \widehat{T}^{-2} \widehat{C}$.

\begin{propo}[\cite{GBR-hypothesis}]
\label{propo:GBR}
Suppose that the whitened matrix $\bX \Omega^{1/2}$ has i.i.d. sub-gaussian rows. Further, assume that
the maximum number of non-zeros per row of $\Omega$ is $t_0 = o(n/\log p)$,
that  $\sigma_{\max}(\Omega) = \sigma_{\rm min}(\Sigma)^{-1}=O(1)$ and
that, for all $i\in [p]$, $\Sigma_{ii}=1$. Then, with high probability,
\begin{eqnarray}
\|\hOmega - \Omega\|_\infty = O(t_0 \sqrt{\log p /n})\,. \label{eq:spectral-loss}
\end{eqnarray}
\end{propo}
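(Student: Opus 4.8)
The plan is to prove Proposition~\ref{propo:GBR} by reducing the $\ell_\infty$ control of $\hOmega-\Omega$ to standard bounds for the nodewise Lasso regressions, in the spirit of the argument of de Geer, B\"uhlmann and Ritov. First I would recall the population analogue of the nodewise regression: writing $X_j = \bX_{-j}\gamma_j^0 + \eta_j$, where $\gamma_j^0$ is the population regression coefficient of $X_j$ on the remaining coordinates, one has $\tau_j^2 \equiv \E[\eta_{j}^2] = \Omega_{jj}^{-1} = \Sigma_{j|{-j}}$, and the $j$-th row of $\Omega$ is exactly $\Omega_{j,\cdot} = \tau_j^{-2}(1,-\gamma_j^0)$ (up to the obvious reindexing). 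Thus the population object $\hOmega$ estimates has the same block form as $\widehat{C},\widehat{T}^2$ with $\hgamma_j,\htau_j^2$ replaced by $\gamma_j^0,\tau_j^2$. Since the sparsity of the $j$-th row of $\Omega$ equals $\|\gamma_j^0\|_0 + 1 \le t_0$, the whitened-subgaussian and well-conditioning hypotheses give, via Proposition~\ref{pro:Bickel} (or the classical Lasso bounds it quotes from \cite{BickelEtAl}) applied to each nodewise regression, that with high probability, simultaneously over $j\in[p]$,
\begin{align*}
\|\hgamma_j - \gamma_j^0\|_1 = O\!\Big(t_0\sqrt{\tfrac{\log p}{n}}\Big)\,,\qquad
\tfrac1n\|\bX_{-j}(\hgamma_j-\gamma_j^0)\|_2^2 = O\!\Big(t_0\tfrac{\log p}{n}\Big)\,.
\end{align*}
The union bound over the $p$ regressions costs only an extra $\log p$ factor inside the probabilistic statements, which is absorbed since $\sqrt{(\log p)/n}$-type bounds are stated up to constants and the subgaussian tails are uniform; here one uses $t_0 = o(n/\log p)$ to guarantee the restricted-eigenvalue conditions hold uniformly with high probability for subgaussian designs (a standard consequence of, e.g., \cite{BickelEtAl} or Rudelson--Zhou-type results).

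Next I would control the two ingredients of $\hOmega = \widehat{T}^{-2}\widehat{C}$ separately. For $\widehat{C}$, the row-wise error is $\|\widehat{C}_{j,\cdot} - C^0_{j,\cdot}\|_\infty \le \|\hgamma_j-\gamma_j^0\|_\infty \le \|\hgamma_j-\gamma_j^0\|_1 = O(t_0\sqrt{(\log p)/n})$. For $\htau_j^2$, I would write $\htau_j^2 = (X_j - \bX_{-j}\hgamma_j)^\sT X_j/n$ and expand: $\htau_j^2 - \tau_j^2$ splits into (i) $\tfrac1n\eta_j^\sT X_j - \tau_j^2$, an empirical-process/concentration term that is $O_\rP(\sqrt{(\log p)/n})$ by subgaussianity of $\eta_j$ and $X_j$ and a union bound, and (ii) a term involving $\hgamma_j - \gamma_j^0$, bounded by Cauchy--Schwarz using $\tfrac1n\|\bX_{-j}(\hgamma_j-\gamma_j^0)\|_2$ and $\tfrac1n\|X_j\|_2 = O_\rP(1)$, giving $O_\rP(\sqrt{t_0 (\log p)/n}\,)$, hence overall $|\htau_j^2 - \tau_j^2| = O_\rP(\sqrt{t_0(\log p)/n}\,)$ uniformly in $j$. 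Since $\tau_j^2 = \Omega_{jj}^{-1}$ is bounded above and below by the conditioning assumption, $\htau_j^{-2}$ is also well-controlled and $|\htau_j^{-2} - \tau_j^{-2}| = O_\rP(\sqrt{t_0(\log p)/n}\,)$. Finally I would combine these through $\hOmega_{j,\cdot} - \Omega_{j,\cdot} = (\htau_j^{-2}-\tau_j^{-2})\widehat{C}_{j,\cdot} + \tau_j^{-2}(\widehat{C}_{j,\cdot}-C^0_{j,\cdot})$, take $\ell_\infty$ norms, note $\|\widehat{C}_{j,\cdot}\|_\infty \le 1 + \|\hgamma_j\|_\infty = O_\rP(1)$, and conclude $|\hOmega_{j,k}-\Omega_{j,k}| = O_\rP(t_0\sqrt{(\log p)/n}\,)$ uniformly over $j,k$, which is the claimed~\eqref{eq:spectral-loss} (with $\|\cdot\|_\infty$ read, as in the paper's footnote, as the elementwise max; note $t_0\sqrt{\log p/n} = o(1/\sqrt{\log p})$ exactly when $t_0 = o(n/(\log p)^2)$ — I would check which scaling the downstream results actually invoke, since the stated hypothesis is $t_0 = o(n/\log p)$ and the conclusion as written is $O(t_0\sqrt{\log p/n})$, consistent with simply carrying the rate through).

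The main obstacle I anticipate is making the uniformity over the $p$ nodewise regressions fully rigorous: Proposition~\ref{pro:Bickel} is stated for a single Lasso fit ``with high probability,'' and I need the restricted-eigenvalue constants $\kappa$, the $\phi_{\max}$ bounds, and the noise-cross-term concentration to hold simultaneously for all $j\in[p]$ with probability $\to 1$. For subgaussian designs this is the standard but slightly delicate point: one shows the sample Gram matrices $\bX_{-j}^\sT\bX_{-j}/n$ satisfy a uniform restricted-eigenvalue property via a net argument (costing $\log p$, absorbed since $t_0 = o(n/\log p)$), and the noise terms $\eta_j$ are subgaussian with uniformly bounded norm because $\|\Omega^{1/2}X_1\|_{\psi_2}$ is bounded. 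The bookkeeping of constants depending only on $\sigma_{\max}(\Omega),\sigma_{\min}(\Sigma),C_1$ — and verifying they stay bounded — is the part that requires care but no new ideas; everything else is assembling quoted Lasso bounds and elementary concentration. I would structure the final write-up as: (1) population identities for $\gamma_j^0,\tau_j^2$; (2) uniform nodewise Lasso bounds; (3) error decomposition for $\hOmega$; (4) combine.
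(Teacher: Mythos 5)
The paper itself does not prove this proposition --- it quotes it from \cite{GBR-hypothesis} --- and your proposal is essentially a reconstruction of the original nodewise-Lasso argument from that reference: population identities $\Omega_{j,\cdot}=\tau_j^{-2}(1,-\gamma_j^0)$, uniform Lasso bounds for the $p$ nodewise regressions, control of $\htau_j^2-\tau_j^2$, and an error decomposition for $\hOmega=\widehat{T}^{-2}\widehat{C}$. That is the right skeleton, and the uniformity-over-$j$ issue you flag is indeed the only technically delicate point.

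There is, however, one genuine gap: you prove the wrong norm. The paper's footnote defines $\|A\|_\infty$ as the $\ell_\infty$ \emph{operator} norm, i.e.\ the maximum row $\ell_1$ norm, and this is exactly how the bound is consumed downstream --- in the proof of Theorem~\ref{thm:main_lem} the term $\Delta^{(2)}$ is controlled via $\|\hOmega_{i,\cdot}-\Omega_{i,\cdot}\|_1$, using $\|\hOmega-\Omega\|_\infty=\max_{i\in[p]}\|\hOmega_{i,\cdot}-\Omega_{i,\cdot}\|_1$. Your final combination step only yields the elementwise bound $\max_{j,k}|\hOmega_{jk}-\Omega_{jk}|=O_\rP(t_0\sqrt{\log p/n})$, which is strictly weaker and would not feed into Theorem~\ref{thm:main_lem} as stated. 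The fix is within the ingredients you already have: carry row $\ell_1$ norms through the decomposition $\hOmega_{j,\cdot}-\Omega_{j,\cdot}=(\htau_j^{-2}-\tau_j^{-2})\widehat{C}_{j,\cdot}+\tau_j^{-2}(\widehat{C}_{j,\cdot}-C^0_{j,\cdot})$, bounding the second term by your $\|\hgamma_j-\gamma_j^0\|_1=O(t_0\sqrt{\log p/n})$ and the first by $|\htau_j^{-2}-\tau_j^{-2}|\cdot\|\widehat{C}_{j,\cdot}\|_1$, where $\|\widehat{C}_{j,\cdot}\|_1\le 1+\|\gamma_j^0\|_1+\|\hgamma_j-\gamma_j^0\|_1=O(\sqrt{t_0})$ (since $\|\gamma_j^0\|_0<t_0$ and $\|\gamma_j^0\|_2=O(1)$ by the conditioning assumptions); the product $O(\sqrt{t_0\log p/n})\cdot O(\sqrt{t_0})$ is again $O(t_0\sqrt{\log p/n})$, giving the operator-norm rate. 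Finally, a minor slip in your closing aside: $t_0\sqrt{\log p/n}=o(1/\sqrt{\log p})$ requires $t_0=o(\sqrt{n}/\log p)$ (which is exactly the condition the paper invokes after the proposition), not $t_0=o(n/(\log p)^2)$.
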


Therefore, if $\Omega$ is sufficiently sparse, namely it has $t_0 = o(\sqrt{n}/\log p)$ non-zeros per row, 
then Eq.~\eqref{eq:spectral-loss} yields $\|\hOmega - \Omega\|_\infty = o(1/\sqrt{\log p})$.
Hence, the estimator $\hOmega$ satisfies the 
conditions of  Theorem~\ref{thm:main_lem} and \ref{thm:error-power}.

%===============================================================
\section{Numerical experiments}
\label{sec:experiment}
We  generated synthetic data from the linear
model~\eqref{eqn:regression} with the choice of parameters
$\sigma =1$, $n = 240$ and $p = 300$. The rows of the design matrix $\bX$ are generated
independently form distribution $\normal(0, \Sigma)$. Here $\Omega = \Sigma^{-1}$ is a circulant matrix with
$\Omega_{ii} = 1$,  $\Omega_{jk} = a$ for $j \neq k$, $|j-k| \le b$, and zero everywhere else.
(The difference between indices is understood modulo $p$.) The parameter $b$ controls the sparsity of
the precision matrix and we take $a = 1/b$ to ensure that $\Omega \succ 0$ (with $\sigma_{\min}(\Omega) > 0.5$).
For parameter vector $\theta_0$, we consider a subset $S \subseteq [p]$, with $|S| = s_0 = 30$, chosen 
 uniformly at random, and set $\theta_{0,i} = 0.1$ for $i \in S$ and zero everywhere else. 

We evaluate the performance of our testing procedure~\eqref{eq:decision-rule} at significance level $\alpha = 0.05$.
The procedure is implemented in $R$ using {\sf glmnet}-package that fits the Lasso solution for an entire path
of regularization parameters $\lambda_n$. We then choose the value of $\lambda_n$ that has the minimum 
mean squares error, approximated by a $5$-fold cross validation.
%, and set $\gamma_n = \lambda_n$.

We compare the performance of decision rule~\eqref{eq:decision-rule} to the testing method presented in~\cite{confidenceJM} for different values of
$b$. (Recall that $b$ controls the sparsity of the precision matrix.) The results are reported in Table~\ref{tbl:comparison}.
The means and the standard deviations are obtained by testing over $20$ realizations of noise and the design matrix.

Interestingly, for small values of $b$ (very sparse precision
matrices), the two methods
perform often identically the same, and their performances differ slightly for moderate $b$. This is in agreement with the theoretical
results  that both methods asymptotically have nearly optimal minimax average power.

Letting $Z = (z_i)_{i=1}^p$ with $z_i \equiv \sqrt{n}(\htheta^u_i - \theta_{0,i})/ [\hOmega \hSigma \hOmega]_{ii}^{1/2}$,
in Fig.~\ref{fig:qqplot} we plot sample quantiles of $Z$ versus the quantiles of a standard normal distribution for one realization
(with $b = 75$). The linear trend of the plot clearly demonstrates that the empirical distribution of $Z$ is approximately
normal, corroborating our theoretical results (Proposition~\ref{pro:main_thm} and Theorem~\ref{thm:main_lem}).

\begin{table*}[]
\begin{center}
{\small
\begin{tabular}{|c|cccc|}
%\cline{2-13}
%& \multicolumn{12}{|c|}{\bf Y} \\  \cline{2-13}
\hline
{\bf Method} & Type I err & Type I err & Avg. power & Avg. power \\ 
&(mean) & (std.) & (mean) & (std)
\\
\cline{1-5} \cline{2-5}
\multicolumn{1}{|c|}{\bf Present testing procedure  $(b = 5)$} &  0.0644 & 0.0060 & 0.5766 & 0.0387
 \\ 
 \multicolumn{1}{|c|}{\bf Procedure of~\cite{confidenceJM} $(b = 5)$} & \color{red} 0.0644 & \color{red} 0.0060 & \color{red} 0.5766& \color{red} 0.0387
 \\
 \hline
\multicolumn{1}{|c|}{\bf Present testing procedure  $(b = 25)$} &  0.0600  & 0.0074 & 0.5750 & 0.0445
 \\ 
\multicolumn{1}{|c|}{\bf Procedure of~\cite{confidenceJM} $(b = 25)$} &  \color{red} 0.0600 & \color{red} 0.0074 & \color{red} 0.5750 & \color{red} 0.0445
\\
\hline 
\multicolumn{1}{|c|}{\bf Present testing procedure  $(b = 50)$} &0.0412   & 0.0061 &0.5350  & 0.0383
 \\ 
\multicolumn{1}{|c|}{\bf Procedure of~\cite{confidenceJM} $(b = 50)$} &  \color{red} 0.0468 & \color{red} 0.0063  & \color{red} 0.5416 & \color{red} 0.0386
\\
\hline 
\multicolumn{1}{|c|}{\bf Present testing procedure  $(b = 75)$} & 0.0509   & 0.0075 &  0.4916 & 0.0334
 \\ 
\multicolumn{1}{|c|}{\bf Procedure of~\cite{confidenceJM} $(b = 75)$} &  \color{red} 0.0507 & \color{red} 0.0073 & \color{red}0.4900  & \color{red} 0.0340
\\
\hline 
\multicolumn{1}{|c|}{\bf Present testing procedure  $(b = 100)$} & 0.0479 & 0.0067 & 0.5150 & 0.0310
 \\ 
\multicolumn{1}{|c|}{\bf Procedure of~\cite{confidenceJM} $(b = 100)$} &  \color{red} 0.0618 & \color{red} 0.0077 & \color{red}0.5416 & \color{red} 0.0302
\\
\hline
\end{tabular}
}
\end{center}
\caption{Comparison between testing procedure \eqref{eq:decision-rule} and procedure proposed in~\cite{confidenceJM} on the setup described in Section~\ref{sec:experiment}. The significance level is $\alpha = 0.05$. The means and the standard deviations are obtained by testing over $20$ realizations.}\label{tbl:comparison}
\end{table*}

%===============================================================
\section{Proofs}\label{sec:proofs}
\subsection{Proof of Theorem~\ref{thm:main_lem}}\label{proof:main_lem}

Write $\Delta  = \Delta^{(1)} + \Delta^{(2)}$ with
\[
\Delta^{(1)} = \sqrt{n} (\Omega \hSigma - \id)(\theta_0 - \htheta)\,,
\quad
\Delta^{(2)} = \sqrt{n} (\hOmega - \Omega) \hSigma(\theta_0 - \htheta)\,.
\]

Let $T\equiv \supp(\htheta)\cup\supp(\theta_0)$. By Eq.~\eqref{eq:Lasso-supp-size},
$|T| = O(s_0)$ because $\phi_{\max}(n)\le C$, $\kappa(s_0,3) \ge  1/C$
for some constant $C<\infty$, with
high probability.
(This in turns follows from the assumption that $\sigma_{\rm
  max}(\Sigma)$, $\sigma_{\rm min}(\Sigma)$ are bounded above and
below, using \cite{rudelson2011reconstruction}.)
Also, note that any set $A\subseteq [p]$ with $|A| \ge cs_0$ can be partitioned as $A = \cup_{\ell=1}^L
A_{\ell}$ with $c\, s_0\le |A_{\ell}|\le 2c\, s_0$. If the claim holds for all
$A_{\ell}$, then it follows for $A$ by summing these cases. We can therefore
assume, without loss of generality, $c\, s_0\le |A|\le 2c\, s_0$.
\begin{figure}[!t]
\centering
\subfigure[$b = 5$] {
\includegraphics*[viewport = 15 0 480 455, width =
2.9in]{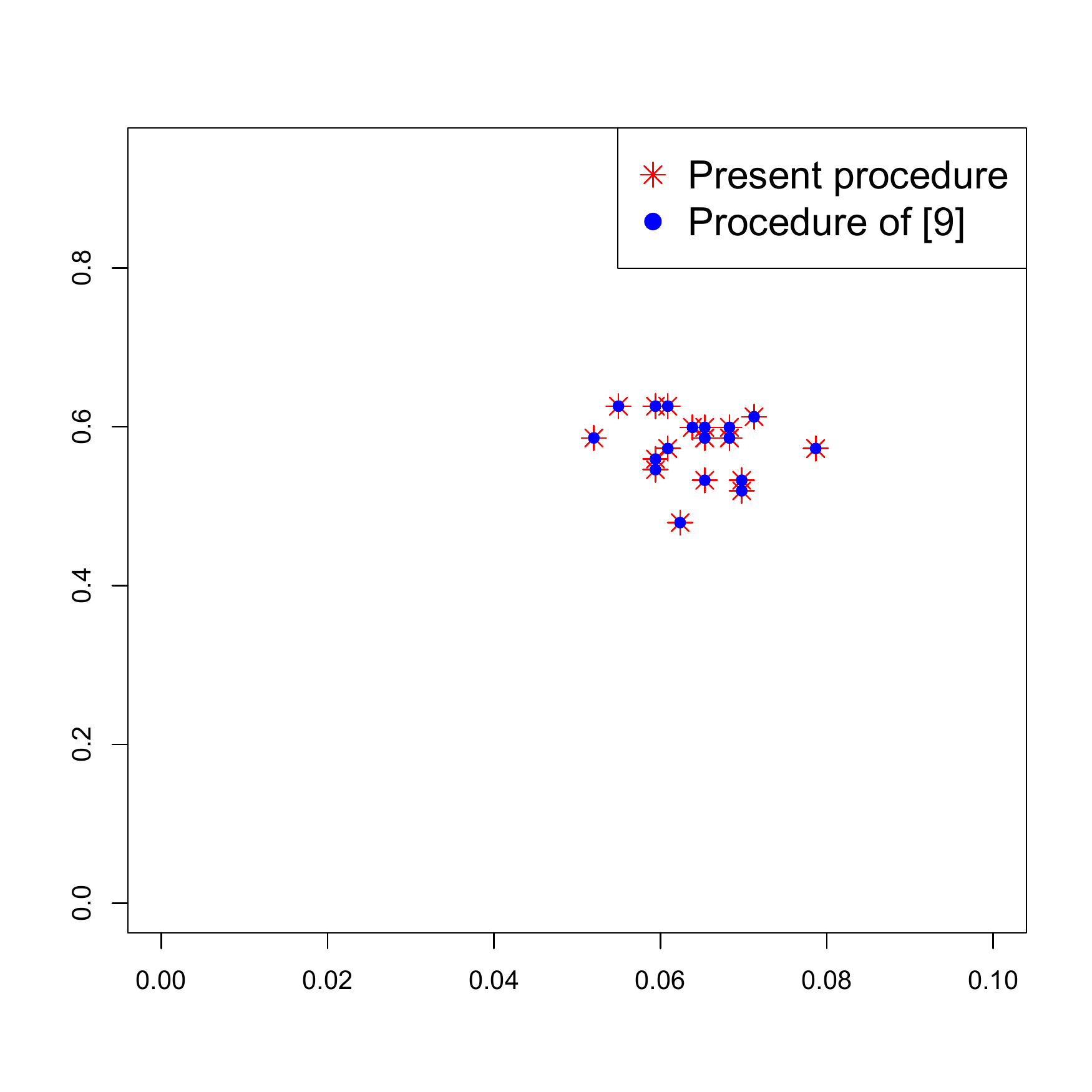}
\put(-130,3){{\small{\sf Type I error }}}
\put(-220,100){\rotatebox{90}{{\small{\sf Power}}}}
\vspace{0.5cm}
\label{fig:b5}
}\hspace{0.5cm}
\subfigure[$b = 100$] {
\includegraphics*[viewport = 15 0 480 455, width =
2.9in]{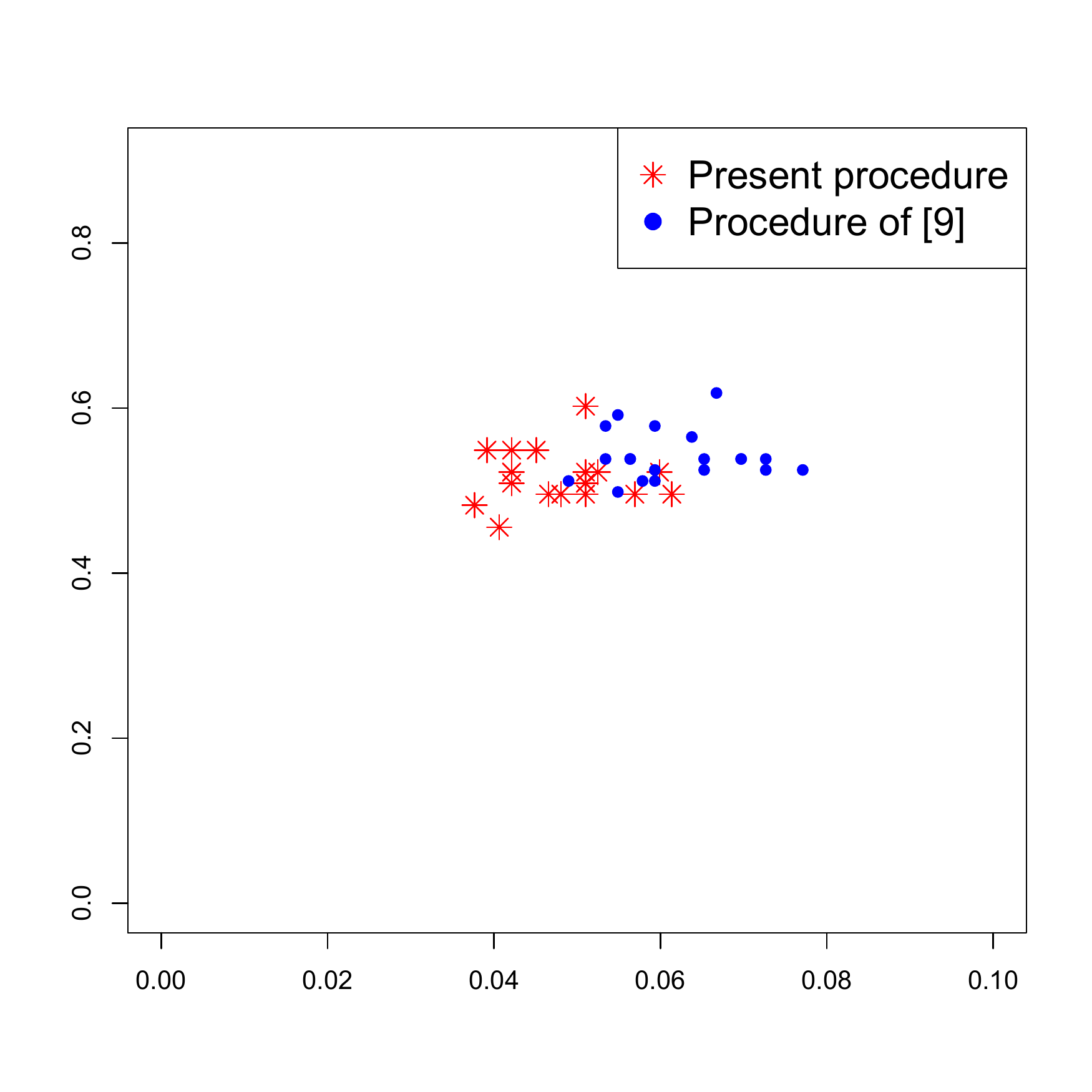}
\put(-130,3){{\small{\sf Type I error }}}
\put(-220,105){\rotatebox{90}{{\small{\sf Power}}}}
\label{fig:b100}
}
\caption{Comparison between testing procedure~\eqref{eq:decision-rule} and the one proposed in~\cite{confidenceJM} for one realization.}
\vspace{-0.5cm}
\end{figure}
\begin{figure}
\centering
\includegraphics*[viewport = 10 10 500 450, width = 3in]{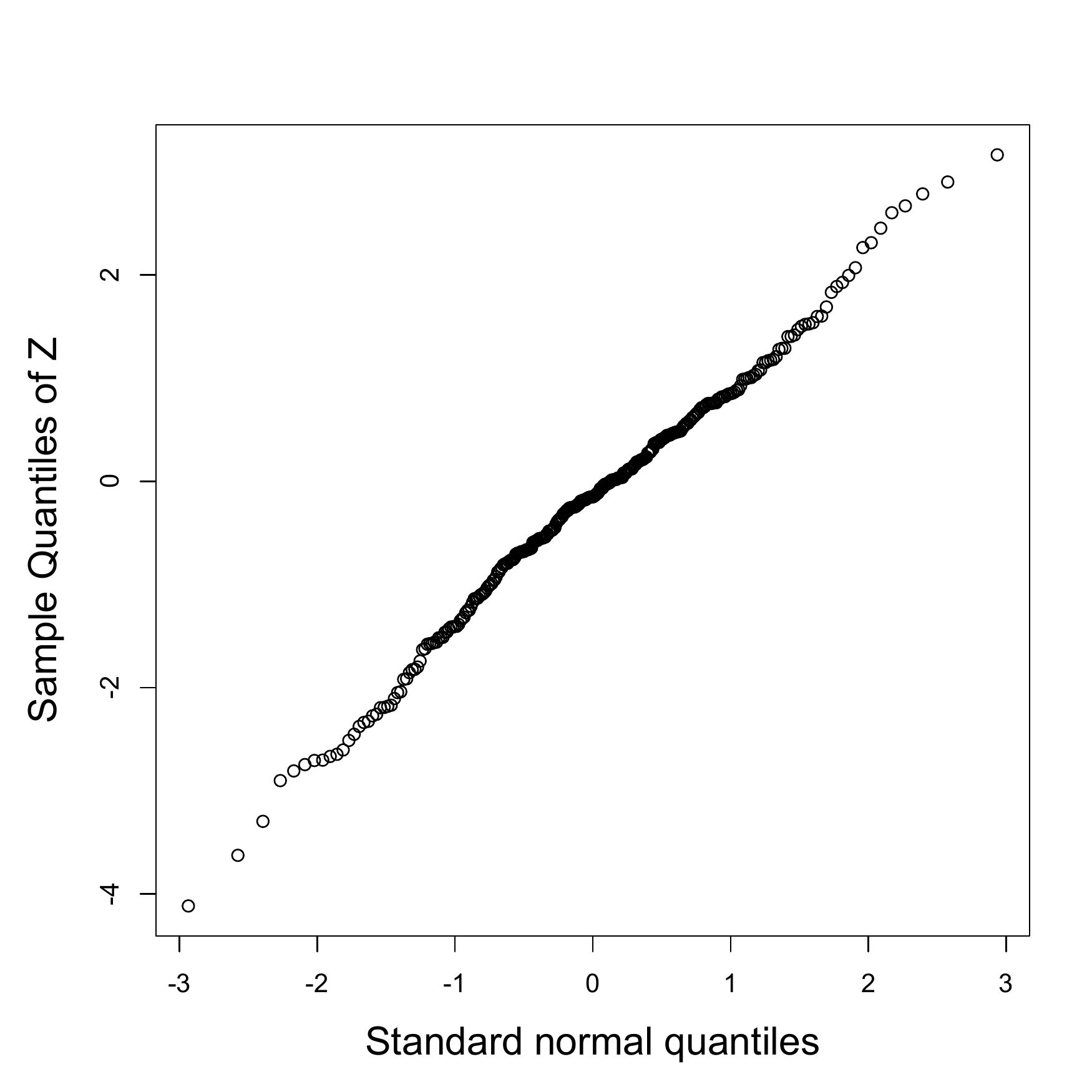}
\caption{Q-Q plot of $Z$ for one realization. Here $b = 75$.}\label{fig:qqplot}
\vspace{-0.5cm}
\end{figure}
We first bound $\|\Delta^{(1)}_A\|_2$ using Hoeffding's inequality.
%The following lemma is used for this part and its proof is deferred to Appendix~\ref{app:subG}.
%
%\begin{lemma}\label{lem:subG}
%Let $x\in \reals^p$ be a sub-gaussian random vector, and let $\cF$ be a linear subspace
%of $\reals^p$ with $\dim(\cF) = d$. Then, the following holds true.
%%
%\begin{eqnarray}
%\prob\Big(\underset{v\in \cF\cap S^{p-1}}{\max}  \<v,x\> \ge a\Big)
%\le 5^d \exp(1-a^2/(4\|x\|_{\psi_2}^2))\,.
%\end{eqnarray}
%% 
%\end{lemma}
Note that 
\vspace{-0.3cm}
$$\Delta^{(1)}_A = \sqrt{n}(\Omega\hSigma - \id)_{A,T}(\htheta-\theta_0)_T\,, $$
since $\supp(\htheta-\theta_0)\subseteq T$. Hence,

\begin{eqnarray}\label{eq:Delta1-1}
\|\Delta^{(1)}_A\|_2 \le \sqrt{n}\|(\Omega\hSigma - \id)_{A,T}\|_2 \|(\htheta-\theta_0)_T\|_2\,.
\end{eqnarray}
Let $R \equiv (\Omega\hSigma - \id)_{A,T}$ and define
$\cF_1\equiv\{u\in S^{p-1}:\,\supp(u)\subseteq[A] \}$, 
$\cF_2\equiv\{v\in S^{p-1}:\,\supp(v)\subseteq[T] \}$.
%
%\begin{align*}
%\cF_1&\equiv\{u\in S^{p-1}:\,\supp(u)\subseteq[A] \} \,,\\
%\cF_2&\equiv\{v\in S^{p-1}:\,\supp(v)\subseteq[T] \}\,.
%\end{align*}
% 
We have
\begin{align}
\|R\|_2 &= \underset{\substack{u,v\\ \|u\|, \|v\|\le 1}}{\sup} \<u,R v\> \nonumber \\
&=\underset{\substack{u,v\\ \|u\|, \|v\|\le 1}}{\sup} \Big(\<u,\frac{1}{n} \sum_{i=1}^n (\Omega X_i)_A (X_i^\sT)_T v\> - \<u_A,v_T\> \Big) \nonumber \\
&\le \underset{u\in \cF_1, v\in \cF_2}{\sup} \,\frac{1}{n} \sum_{i=1}^n
\Big( \<u, \Omega X_i\> \<X_i,v\> - \<u,v\>\Big)\,.\label{eq:R-1}
\end{align}

Fix $u\in \cF_1$ and $v \in \cF_2$. 
Let $\xi_i \equiv \<u, \Omega X_i\> \<X_i,v\> - \<u,v\>$.
The variables $\xi_{i}$ are independent and it is easy to see that $\E(\xi_i) = 0$. 
By~\cite[Remark 5.18]{Vershynin-CS}, we have
$$ \|\xi_i\|_{\psi_1} \le 2\|\<u, \Omega X_i\> \<X_i,v\>\|_{\psi_1}\,.$$
Moreover, by Lemma~\ref{lem:subG-subE}, 
\begin{align*}
\|\<u, \Omega X_i\> \<X_i,v\>\|_{\psi_1} &\le 2\|\<u, \Omega X_i\>\|_{\psi_2} \|\<X_i,v\>\|_{\psi_2}\\
 &= \|\Omega^{1/2}u\|_2 \|\Omega^{-1/2}v\|_2 \|\Omega^{1/2}X_i\|_{\psi_2}^2\\
 &\le \sqrt{\sigma_{\max}(\Sigma)/\sigma_{\min}(\Sigma)} \|\Omega^{1/2}X_i\|_{\psi_2}^2\,.
\end{align*}
Hence, $\max_{i\in[n]}\|\xi_i\|_{\psi_1} \le K$, for some constant $K$. Now, by applying Bernstein inequality for centered sub-exponential random
variables~\cite{Vershynin-CS}, for every $t\ge 0$, we have
\[
\prob\bigg(\frac{1}{n} \sum_{i=1}^n \xi_i  \ge t \Big) \le 2 \exp\bigg[-Cn\min\Big(\frac{t^2}{K^2}, \frac{t}{K}\Big) \bigg]\,,
\]
where $C>0$ is an absolute constant. Therefore, for any constant
$c_1>0$, since $n = \omega(s_0\log p)$, we have
\begin{eqnarray}\label{eq:R-2}
\prob\bigg(\frac{1}{n} \sum_{i=1}^n \xi_i \ge K\sqrt{\frac{c_1 s_0\log p}{Cn}} \bigg) \le  p^{-c_1s_0}\,.
\end{eqnarray}
In order to bound the right hand side of Eq.~\eqref{eq:R-1}, we use a \emph{$\eps$-net argument}.
Clearly, $\cF_1\cong S^{|A|-1}$ and $\cF_2 \cong S^{|T|-1}$ where $\cong$ denotes that the two objects are isometric.
By~\cite[Lemma 5.2]{Vershynin-CS}, there exists a $\frac{1}{2}$-net $\cN_1$ of $S^{|A|-1}$
(and hence of $\cF_1$) with size at most $5^{|A|}$. Similarly there exists a $\frac{1}{2}$-net $\cN_2$ of $\cF_2$
of size at most $5^{|T|}$. Hence, using Eq.~\eqref{eq:R-2} and taking union bound over all vectors in ${\cal N}_1$ and ${\cal N}_2$ , we obtain
\begin{align}
\underset{{u\in {\cal N}_1}, {v \in {\cal N}_2}}{\sup}  \frac{1}{n} \sum_{i=1}^n \<u, (\Omega X_i X_i^\sT - \id)v\> 
%\quad \quad \quad \quad \,
 \le K\sqrt{\frac{c_1 s_0\log p}{Cn}}\,, \label{eq:R-3}
\end{align}
with probability at least $1- 5^{|A|+|T|} p^{-c_1s_0}$.

The last part of the argument is based on the following lemma, whose proof is deferred to Appendix~\ref{app:net}.
\begin{lemma}\label{lem:net}
Let $M \in \reals^{p\times p}$. Then,
$$ \underset{u\in \cF_1, v\in \cF_2}{\sup} \<u,Mv\> \le 4 \underset{u\in {\cal N}_1, v\in {\cal N}_2}{\sup} \<u,Mv\>\,.$$
\end{lemma}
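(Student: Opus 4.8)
The plan is to derive the bound by invoking twice the following elementary observation: \emph{if $\cN$ is a $\tfrac12$-net of the unit sphere $S^{d-1}$ and $w\in\reals^d$, then $\|w\|_2\le 2\max_{x\in\cN}\<w,x\>$.} Indeed, the case $w=0$ is trivial, and otherwise picking $x_0\in\cN$ within distance $\tfrac12$ of $w/\|w\|_2$ gives $\<w,x_0\>\ge \|w\|_2(1-\tfrac12)=\tfrac12\|w\|_2$. The point worth flagging is that the textbook \emph{symmetric} two-sided $\eps$-net estimate for a bilinear form carries a prefactor $(1-2\eps)^{-1}$, which diverges at $\eps=\tfrac12$; the way around this is to exploit that the supremum of a linear functional over a \emph{full} sphere is attained exactly, so that discretizing each of the two arguments costs only a multiplicative factor of $2$, and the two factors multiply to give $4$.

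Concretely, I would first note that, since every $u\in\cF_1$ is supported on $A$, for each fixed $v$ one has $\sup_{u\in\cF_1}\<u,Mv\>=\|(Mv)_A\|_2$, the maximizer being $u=(Mv)_A/\|(Mv)_A\|_2\in\cF_1$; hence $\sup_{u\in\cF_1,v\in\cF_2}\<u,Mv\>=\sup_{v\in\cF_2}\|(Mv)_A\|_2$. Applying the observation with $w=(Mv)_A$ and the net $\cN_1$ — which is a $\tfrac12$-net of the unit sphere of $\reals^A$, i.e.\ of $\cF_1$ — gives $\|(Mv)_A\|_2\le 2\max_{u\in\cN_1}\<u,Mv\>$, and therefore
\[
\sup_{u\in\cF_1,v\in\cF_2}\<u,Mv\>\;\le\;2\max_{u\in\cN_1}\;\sup_{v\in\cF_2}\<u,Mv\>\,.
\]

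For the second step, fix $u\in\cN_1$ and write $\sup_{v\in\cF_2}\<u,Mv\>=\sup_{v\in\cF_2}\<(M^\sT u)_T,v\>=\|(M^\sT u)_T\|_2$, again because the linear functional $v\mapsto\<M^\sT u,v\>$ attains its maximum over the sphere $\cF_2$. A second application of the observation, now with $w=(M^\sT u)_T$ and the $\tfrac12$-net $\cN_2$ of the unit sphere of $\reals^T$, yields $\|(M^\sT u)_T\|_2\le 2\max_{v\in\cN_2}\<u,Mv\>$. Chaining the two displays gives $\sup_{u\in\cF_1,v\in\cF_2}\<u,Mv\>\le 4\max_{u\in\cN_1,v\in\cN_2}\<u,Mv\>$, as claimed. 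I do not anticipate any genuine difficulty here: the only thing requiring care is the order of the two reductions — absorb the supremum over $\cF_1$ first and only then discretize the supremum over $\cF_2$ — so that the two factors of $2$ combine multiplicatively rather than additively.
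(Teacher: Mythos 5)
Your proof is correct, and it takes a genuinely different route from the paper's. The paper expands each $u\in\cF_1$ and $v\in\cF_2$ as geometric series over net elements, $u=\sum_{i\ge 0}2^{-i}u_i$ and $v=\sum_{j\ge 0}2^{-j}v_j$ with $u_i\in\cN_1$, $v_j\in\cN_2$, and then bounds $\<u,Mv\>=\sum_{i,j}2^{-i-j}\<u_i,Mv_j\>$ term by term, the factor $4=\big(\sum_i 2^{-i}\big)^2$ coming from summing the coefficients. You instead exploit that the supremum of a linear functional over the full sphere $\cF_1$ (resp.\ $\cF_2$) is attained and equals a Euclidean norm, and then pay a factor $2$ per argument via the one-step estimate $\|w\|_2\le 2\max_{x\in\cN}\<w,x\>$ for a $\tfrac12$-net. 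The two arguments are of comparable length; yours has the merit of making explicit why the naive symmetric bound with prefactor $(1-2\eps)^{-1}$ is not needed, and it automatically guarantees that the right-hand side is nonnegative (since it dominates a norm), a point the paper's series argument leaves implicit. The paper's decomposition, on the other hand, avoids the intermediate identification of the partial suprema with $\|(Mv)_A\|_2$ and $\|(M^\sT u)_T\|_2$ and treats both arguments symmetrically in a single display. Either way the constant $4$ and the conclusion are the same.
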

Employing Lemma~\ref{lem:net} and bound~\eqref{eq:R-3} in Eq.~\eqref{eq:R-1}, we arrive at
\begin{align}\label{eq:R-4}
\|R\|_2\le 4K\sqrt{\frac{c_1 s_0\log p}{Cn}}\,,
\end{align}
with probability at least $1- 5^{|A|+|T|} p^{-c_1s_0}$.

Finally, note that there are  less than $p^{c's_0}$ subsets $A,T$, with $|T|\le Cs_0$ and $|A|\le 2cs_0$,
for some constant $c'>0$.
Taking union bound over all these sets, we obtain that with high probability,
$$\|(\Omega\hSigma-\id)_{A,T}\|_2 \le  C\sqrt{s_0 \log p /n}\,,$$
for all such sets $A, T$, where $C = C(c,C_1,\sigma_{\max}(\Sigma),\sigma_{\min}(\Sigma))$
is a constant.

Now, plugging this bound and the bound~\eqref{eq:l2} (recalling that
$\kappa(s_0,q,3)$ is bounded away from zero with high probability by
\cite{rudelson2011reconstruction}
because $\sigma_{\rm
  min}(\Sigma)$ is bounded away from zero)  into Eq.~\eqref{eq:Delta1-1}, we get
\begin{eqnarray}\label{eq:Delta1}
\|\Delta^{(1)}_A\|_2 \le C \frac{\sigma s_0 \log p}{\sqrt{n}}\,,
\end{eqnarray}
with $C = C(c,C_1,\sigma_{\max}(\Sigma),\sigma_{\min}(\Sigma))$  a constant.

To bound $\|\Delta^{(2)}_A\|_2$, we bound each entry $\Delta^{(2)}_i$ separately:
\begin{align} \label{eq:Delta2i}
|\Delta^{(2)}_i| &\le \sqrt{n} \|\hOmega_{i,\cdot} - \Omega_{i,\cdot}\|_1  \|\hSigma(\theta_0 - \htheta)\|_\infty\,.
\end{align}
Note that the subgradient condition for optimization~\eqref{eq:LassoEstimator} reads
$$ \Sigma(\htheta - \theta_0) = \bX^\sT W/n + \lambda v(\htheta)\,,$$
with $v(\htheta)\in\partial\|\htheta\|_1$. Thus
$\|\Sigma(\htheta-\theta_0)\|_\infty = O(\sqrt{\log p/n})$, with high probability, for the choice of 
$\lambda = O(\sqrt{\log p /n})$. Therefore, Eq.~\eqref{eq:Delta2i} implies
\begin{eqnarray}\label{eq:Delta2}
\|\Delta^{(2)}\|_\infty = o(1)\,,
\end{eqnarray}
since by our assumption
$$\|\hOmega - \Omega\|_\infty = \max_{i\in [p]} \|\hOmega_{i,\cdot} - \Omega_{i,\cdot}\|_1 = o(1/\sqrt{\log p})\,.$$

We are now ready to bound $\|\Delta_A\|_2^2/|A|$. By triangle inequality,
\[
\|\Delta_A\|_2^2 \le 2\|\Delta^{(1)}_A\|_2^2 + 2\|\Delta^{(2)}\|_2^2\,.
\]
Applying bounds~\eqref{eq:Delta1} and~\eqref{eq:Delta2}, we obtain
\begin{eqnarray*}
\frac{\|\Delta_A\|_2^2}{|A|} \le C\frac{\sigma^2 s_0^2 (\log p)^2}{n |A|} + o(1)\,.
\end{eqnarray*}
This implies the thesis since$|A| \ge cs_0$ for some constant $c$. 
%

%
%======================================================================
\subsection{Proof of Theorem~\ref{thm:error-power}}\label{proof:error-power}

We begin with a lemma that lower bounds the variance of $\htheta^u_i$.
\begin{lemma}\label{lem:BoundVariance}
Assume that the rows of $\bX\Omega^{1/2}$ are subgaussian, i.e.
$\|\Omega^{1/2} X_1\|_{\psi_2}<C$ for some constant $C$.  Further assume
that $1/C'\le \sigma_{\rm min}(\Sigma)\le \sigma_{\rm max}(\Sigma)\le C'$, for some constant $C'$. 
Finally assume that $\|\hOmega-\Omega\|_{\infty}\le 1/\sqrt{\log p}$ with probability
at least $1-\eps$.

Then for any constant $c_0 > 0$, there exist  
constants $c_1, c_2>0$ such that 
\begin{align}
\prob\Big(\max_{i\in [p]} \big|[\hOmega\hSigma\hOmega^\sT]_{ii} -\Omega_{ii}\big| \le
c_0\Big)\ge 1-c_1\, p^2\, e^{-c_2n}-\eps\, .
\end{align}
\end{lemma}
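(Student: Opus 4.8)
The plan is to show that $[\hOmega\hSigma\hOmega^\sT]_{ii}$ is close to $\Omega_{ii}$ by passing through the chain $[\hOmega\hSigma\hOmega^\sT]_{ii} \approx [\Omega\hSigma\Omega^\sT]_{ii} \approx [\Omega\Sigma\Omega^\sT]_{ii} = \Omega_{ii}$, controlling each approximation uniformly over $i\in[p]$. First I would work on the event $\cE_1 \equiv \{\|\hOmega-\Omega\|_\infty \le 1/\sqrt{\log p}\}$, which by assumption has probability at least $1-\eps$. On this event I can bound the difference $[\hOmega\hSigma\hOmega^\sT]_{ii} - [\Omega\hSigma\Omega^\sT]_{ii}$ by writing $\hOmega\hSigma\hOmega^\sT - \Omega\hSigma\Omega^\sT = (\hOmega-\Omega)\hSigma\hOmega^\sT + \Omega\hSigma(\hOmega-\Omega)^\sT$, so that the $(i,i)$ entry is controlled by terms of the form $\|(\hOmega-\Omega)_{i,\cdot}\|_1 \cdot \|(\hSigma\hOmega^\sT)_{\cdot,i}\|_\infty$ and similar; here $\|(\hOmega-\Omega)_{i,\cdot}\|_1 \le \|\hOmega-\Omega\|_\infty$ is the $\ell_\infty$ operator norm, and I need $|\hSigma|_\infty = O(1)$ and $|\hOmega|_\infty = O(1)$ with high probability.

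The second step is the concentration estimate $\max_i |[\Omega\hSigma\Omega^\sT]_{ii} - [\Omega\Sigma\Omega^\sT]_{ii}| \le c_0/2$. Writing $[\Omega(\hSigma-\Sigma)\Omega^\sT]_{ii} = \frac{1}{n}\sum_{\ell=1}^n \big(\<\Omega_{i,\cdot}, X_\ell\>^2 - \E\<\Omega_{i,\cdot},X_\ell\>^2\big)$, this is an average of i.i.d. centered random variables. Since the rows of $\bX\Omega^{1/2}$ are subgaussian with $\|\Omega^{1/2}X_1\|_{\psi_2}<C$, and $\<\Omega_{i,\cdot},X_\ell\> = \<\Omega^{1/2}e_i, \Omega^{1/2}X_\ell\>$ with $\|\Omega^{1/2}e_i\|_2 \le \sqrt{\sigma_{\max}(\Omega)} = O(1)$, each $\<\Omega_{i,\cdot},X_\ell\>$ is subgaussian with constant $O(1)$, hence its square is subexponential. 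Applying Bernstein's inequality for centered subexponential variables (as used in the proof of Theorem~\ref{thm:main_lem}) gives $\prob(|[\Omega(\hSigma-\Sigma)\Omega^\sT]_{ii}| > c_0/2) \le c_1 e^{-c_2 n}$ for a single $i$ (using that $c_0$ is a fixed constant, so the $\min(t^2,t)$ in Bernstein is just a constant). A union bound over $i\in[p]$ costs a factor $p$; combined with the earlier union bounds for $|\hSigma|_\infty$, $|\hOmega|_\infty$ over all $O(p^2)$ entries, this produces the claimed $c_1 p^2 e^{-c_2 n}$ term.

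The main obstacle is making the first step (the $\hOmega$-vs-$\Omega$ replacement) genuinely uniform and quantitatively small: I need $|\hSigma|_\infty$ and $|\hOmega|_\infty$ bounded by constants with probability $\ge 1 - c_1 p^2 e^{-c_2 n}$. For $|\hSigma|_\infty$ this follows from subexponential concentration of $\hSigma_{jk} = \frac{1}{n}\sum_\ell X_{\ell j}X_{\ell k}$ around $\Sigma_{jk}$ (same Bernstein argument, union bound over the $\le p^2$ entries, using $|\Sigma|_\infty \le \sigma_{\max}(\Sigma) = O(1)$); then $|\hOmega|_\infty \le |\Omega|_\infty + \|\hOmega-\Omega\|_\infty \le \sigma_{\max}(\Omega) + 1/\sqrt{\log p} = O(1)$ on $\cE_1$. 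Once all these "good events" are intersected, the triangle inequality combining steps one and two, each bounded by $c_0/2$, closes the argument; the probability of the bad event is at most the sum of the Bernstein union bounds ($O(p^2 e^{-c_2 n})$) plus $\eps$.
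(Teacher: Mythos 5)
Your second step (Bernstein concentration of $[\Omega\hSigma\Omega^\sT]_{ii}=v^\sT\hSigma v$ around $\Omega_{ii}$, and of the entries $\hSigma_{jk}$ around $\Sigma_{jk}$, with union bounds over $p$ and $p^2$ events) is correct and is exactly what the paper does. The gap is in your first step, the replacement of $\hOmega$ by $\Omega$. From the decomposition $(\hOmega-\Omega)\hSigma\hOmega^\sT+\Omega\hSigma(\hOmega-\Omega)^\sT$, H\"older gives
$\big|[(\hOmega-\Omega)\hSigma\hOmega^\sT]_{ii}\big|\le \|(\hOmega-\Omega)_{i,\cdot}\|_1\,\max_{j}\big|[\hSigma\hOmega^\sT]_{ji}\big|$, and you claim the second factor is $O(1)$ because $|\hSigma|_\infty=O(1)$ and $|\hOmega|_\infty=O(1)$. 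That does not follow: $[\hSigma\hOmega^\sT]_{ji}=\sum_{k\in[p]}\hSigma_{jk}\hOmega_{ik}$ involves a sum of $p$ terms, so elementwise bounds only give $O(p)$, and $p/\sqrt{\log p}$ is not $o(1)$. What you would actually need is $\|\hOmega_{i,\cdot}\|_1=O(1)$ (or $O(\sqrt{\log p})$), which the lemma's hypotheses do not provide: even with $\sigma_{\min}(\Sigma),\sigma_{\max}(\Sigma)$ bounded, a row of $\Omega$ can have $\ell_1$ norm of order $\sqrt{p}$. So as written the cross terms are not shown to be $o(1)$.

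The paper avoids this issue by a different handling of the cross term: writing $v=\Omega^\sT e_i$, $\delta=(\Omega-\hOmega)^\sT e_i$, it uses Cauchy--Schwarz with respect to the positive semidefinite form $\hSigma$, namely $|v^\sT\hSigma\delta|\le\sqrt{(v^\sT\hSigma v)(\delta^\sT\hSigma\delta)}$, together with $\delta^\sT\hSigma\delta\le|\hSigma|_\infty\|\delta\|_1^2\le|\hSigma|_\infty/\log p$ on the event $\|\hOmega-\Omega\|_\infty\le1/\sqrt{\log p}$; then only $|\hSigma|_\infty\le 2$ and the already-established concentration of $v^\sT\hSigma v$ are needed, with no control of $\hSigma\hOmega^\sT$ entries. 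Your argument can be repaired along your own lines by splitting $\hOmega=\Omega+(\hOmega-\Omega)$: the quadratic term $(\hOmega-\Omega)_{i,\cdot}\hSigma(\hOmega-\Omega)_{i,\cdot}^\sT\le|\hSigma|_\infty/\log p$ is fine, and for the linear terms you would need to show $\|\hSigma\Omega e_i\|_\infty=O(1)$ with probability $1-c_1p^2e^{-c_2n}$, which does hold (each entry of $\hSigma\Omega e_i$ concentrates around $(\Sigma\Omega e_i)_j=\ind(i=j)$ by the same Bernstein argument, union bounded over the $p^2$ pairs $(i,j)$), but this step is missing from your proposal and must be added; as stated, the claimed sufficiency of $|\hSigma|_\infty,|\hOmega|_\infty=O(1)$ is false.
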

\begin{proof}
Fix $i\in [p]$, and let $v = \Omega^\sT e_i$ be the $i$-th column of
$\Omega$. Further let $\delta = (\Omega-\hOmega)^\sT e_i$. Then,
\begin{align*}
[\hOmega\hSigma\hOmega^\sT]_{ii} &= (v-\delta)^{\sT}\hSigma(v-\delta)
= v^{\sT}\hSigma v - 2 v^{\sT}\hSigma \delta +
\delta^{\sT}\hSigma\delta\,.
\end{align*}
Since $\hSigma \succeq 0$, we have 
$$v^\sT \hSigma \delta \le \sqrt{(v^\sT \hSigma v) (\delta^\sT \hSigma \delta)}.$$
Consequently,
$$\Big(\sqrt{v^{\sT}\hSigma v} - \sqrt{\delta^{\sT}\hSigma\delta}\Big)^2 
\le [\hOmega \hSigma \hOmega^\sT]_{ii} \le  \Big(\sqrt{v^{\sT}\hSigma v} + \sqrt{\delta^{\sT}\hSigma\delta}\Big)^2 \,.$$
Let $\cE$ be the event that $\|\hOmega-\Omega\|_{\infty}\le
1/\sqrt{\log p}$. On $\cE$, we have 
\begin{align*}
\delta^\sT \hSigma \delta  = \sum_{i,j \in [p]} \hSigma_{ij} \delta_i \delta_j \le |\hSigma|_\infty \|\delta\|_1^2 
\le |\hSigma|_\infty \|\Omega - \hOmega\|_\infty^2 \le \frac{|\hSigma|_\infty}{\log p}\,.
\end{align*}
It is therefore sufficient to prove that $|v^{\sT}\hSigma v - \Omega_{ii}| \le c_0/2$
with probability at least $1-c_1\,e^{-c_2
n}$, and $|\hSigma |_{\infty} <2$ with probability at least $1-c_1p^2\,e^{-c_2
n}$. The claim then follows by union bound.

Consider first $v^{\sT}\hSigma v$. We have $\E\{v^{\sT}\hSigma v\} =
[\Omega\Sigma\Omega]_{ii}=\Omega_{ii}$. Further
\begin{align}
v^{\sT}\hSigma v-\E(v^{\sT}\hSigma v) = \frac{1}{n}\sum_{j=1}^ne_i^{\sT}\Omega
\big[X_jX_j^{\sT}-\E(X_jX_j^{\sT})\big]\Omega^\sT e_i =\frac{1}{n}\sum_{j=1}^n\xi_j\, .
\end{align}
Here the $\xi_j$'s are i.i.d. sub-exponential with norm 
\begin{align}
\|\xi_1\|_{\psi_1}\le
4\|\<e_i,\Omega X_1\>\|_{\psi_2}^2\le 4C^2\|\Omega^{1/2} e_i\|_2^2\le
4C^2\sigma_{\rm max}(\Omega)\le 4C^2C'\,.
\end{align}
The claim then follows by applying Bernstein inequality for
sub-exponential random variables as in the proof in Section \ref{proof:main_lem}.

In order to bound $|\hSigma |_{\infty}$, we use
\begin{align}
\prob\big(|\hSigma |_{\infty}\ge 2\big) \le \sum_{i,j=1}^p 
\prob\big(|\hSigma_{ij} |\ge 2\big) \le \sum_{i,j=1}^p 
\prob\big(|\hSigma_{ij} -\E\hSigma_{ij}|\ge 1\big) \, ,
\end{align}
where the last inequality follows from $\E\hSigma_{ij} =
\Sigma_{ij}\le \sqrt{\Sigma_{ii}\Sigma_{jj}}=1$. Finally, the
probability of $|\hSigma_{ij} -\E\hSigma_{ij}|\ge 1$ is bounded once
again as above using Bernstein inequality.
\end{proof}

We next prove Eq.~\eqref{eq:typeI}. For $i\in [p]$, let 
$$A_i \equiv \frac{\sqrt{n}(\htheta_i^u-\theta_{0,i})}{\hsigma [\hOmega\hSigma\hOmega^\sT]_{ii}^{1/2}} = 
 \frac{\sigma}{\hsigma} \tZ_i + \frac{\Delta_i}{\hsigma [\hOmega \hSigma \hOmega^\sT]_{ii}^{1/2}}.$$
Invoking Proposition~\ref{pro:main_thm}, $\tZ_i|\bX \sim \normal(0,1)$.
For any constant $b \ge 0$, we have
\begin{align*}
\sum_{i\in S_0^c} \prob_{\theta_0}(|A_i| \ge b) &\le
\E\Big\{\sum_{i\in S_0^c}  \ind\Big(|\tZ_i| \ge  \frac{\hsigma}{\sigma} b - \frac{|\Delta_i|}{\sigma [\hOmega \hSigma \hOmega^\sT]_{ii}^{1/2}}\Big)\Big\}\\
&\le \E\Big\{\sum_{i\in S_0^c\backslash \cC_n(\eps)}  \ind\Big(|\tZ_i| \ge  \frac{\hsigma}{\sigma} b - \frac{|\Delta_i|}{\sigma [\hOmega \hSigma \hOmega^\sT]_{ii}^{1/2}}\Big) + |\cC_n(\eps)|  \Big\} \,.
\end{align*}
By definition, $|\Delta_i| \le \eps$ for $i\in S_0^c\backslash \cC_n(\eps)$. Hence,
\begin{align}\label{eq:T-1}
\frac{1}{p-s_0}\sum_{i\in S_0^c} \prob_{\theta_0}(|A_i| \ge b) \le
\frac{1}{p-s_0}\E\Big\{\sum_{i\in S_0^c}  \ind\Big(|\tZ_i| \ge  \frac{\hsigma}{\sigma} b - \frac{\eps}{\sigma [\hOmega \hSigma \hOmega^\sT]_{ii}^{1/2}}\Big) 
+ |\cC_n(\eps)| \Big\} \,.
\end{align}
Let $\cG \equiv \cG(\delta,c_0)$ be the following event:
$$ \cG \equiv \cG(\delta,c_0)  = \Big\{\max_{i\in [p]} \big|[\hOmega \hSigma \hOmega^\sT]_{ii} - \Omega_{ii} \big| \le c_0, \, |\hsigma / \sigma- 1| \le \delta \Big\}.$$
Then,
\begin{align}
\E\Big\{\sum_{i\in S_0^c}  \ind\Big(|\tZ_i| \ge  \frac{\hsigma}{\sigma} b - \frac{\eps}{\sigma [\hOmega \hSigma \hOmega^\sT]_{ii}^{1/2}}\Big) \Big\}
&\le \E\Big\{\Big[\sum_{i\in S_0^c}  \ind\Big(|\tZ_i| \ge  \frac{\hsigma}{\sigma} b - \frac{\eps}{\sigma [\hOmega \hSigma \hOmega^\sT]_{ii}^{1/2}}\Big)\Big] \cdot \ind(\cG) \Big\}
+ \prob(\cG^c) \nonumber\\
&\le \E\Big\{\Big[\sum_{i\in S_0^c}  \ind\Big(|\tZ_i| \ge  (1-\delta) b - \frac{\eps}{\sigma  \sqrt{\Omega_{ii}-c_0}}\Big) \Big]\cdot \ind(\cG) \Big\}
+ \prob(\cG^c) \nonumber\\
&\le \E\Big\{\sum_{i\in S_0^c}  \ind\Big(|\tZ_i| \ge  (1-\delta) b - \frac{\eps}{\sigma  \sqrt{\Omega_{ii}-c_0}}\Big) \Big\}
+ \prob(\cG^c)\,. \label{eq:Z-1}
\end{align}
Recalling that $\tZ_i|\bX \sim \normal(0,1)$, the following holds for any $b' \in \reals$:
\begin{align*}
\E(\ind(|\tZ_i|\ge b')) = \E\{\prob(|\tZ_i| \ge b'|\bX)\} = 2(1-\Phi(b'))\,.
\end{align*}
Plugging in Eq.~\eqref{eq:Z-1}, we obtain
\begin{align}
\underset{n\to \infty}{\lim\sup}\, \frac{1}{p-s_0}
\E\Big\{\sum_{i\in S_0^c}  \ind\Big(|\tZ_i| \ge  \frac{\hsigma}{\sigma} b - \frac{\eps}{\sigma [\hOmega \hSigma \hOmega^\sT]_{ii}^{1/2}}\Big) \Big\}
% \nonumber \\
%&\le \lim_{n\to \infty} \frac{1}{p-s_0}\E\Big\{\sum_{i\in S_0^c}  \ind\Big(|\tZ_i| \ge  (1-\delta) b - \frac{\eps}{\sigma \sqrt{\Omega_{ii}-c_0}}\Big) \Big\}
% + \lim_{n\to \infty} \frac{1}{p-s_0} \prob(\cG^c)\nonumber\\
 \le 2\Big\{1-\Phi\Big((1-\delta) b - \frac{\eps}{\sigma \sqrt{\Omega_{ii} -c_0}} \Big) \Big\}\,, \label{eq:Z-2}
\end{align}
where in the last equality we used the fact that the event $\cG$ holds with high probability, i.e., $\lim_{n\to \infty} \prob(\cG^c) = 0$, as per Lemma~\ref{lem:BoundVariance}.

Employing bound~\eqref{eq:Z-2} in Eq.~\eqref{eq:T-1}, we get
\begin{align*}
\underset{n\to \infty}{\lim\sup}\, \frac{1}{p-s_0}\sum_{i\in S_0^c} \prob_{\theta_0}(|A_i| \ge b) &\le
2\Big\{1-\Phi\Big((1-\delta) b - \frac{\eps}{\sigma \sqrt{\Omega_{ii} -c_0}} \Big) \Big\}+ \lim_{n\to \infty}\E\Big(\frac{|\cC_n(\eps)|}{p-s_0}\Big)\\
&= 2\Big\{1-\Phi\Big((1-\delta) b - \frac{\eps}{\sigma \sqrt{\Omega_{ii} -c_0}} \Big) \Big\}\,,
\end{align*}
where the last step follows readily from Corollary~\ref{cor:main}. Since the above holds for all $\eps, \delta>0$, we obtain the following:
\begin{align}\label{eq:T-2}
\underset{n\to \infty}{\lim\sup}\, \frac{1}{p-s_0}\sum_{i\in S_0^c} \prob_{\theta_0}(|A_i| \ge b) \le
2(1-\Phi( b ))\,.
\end{align}

We are now ready to prove Eq.~\eqref{eq:typeI}. For the decision rule given in Eq.~\eqref{eq:decision-rule}, we have
\begin{align*}
&\underset{n\to \infty}{\lim\sup}\, \frac{1}{p-s_0} \sum_{i\in S_0^c} \prob_{\theta_0}(T_{i,\bX}(y)=1)
= \underset{n\to \infty}{\lim\sup}\, \frac{1}{p-s_0} \sum_{i\in S_0^c} \prob_{\theta_0}(P_i \le \alpha)\\
&= \underset{n\to \infty}{\lim\sup}\,  \frac{1}{p-s_0} \sum_{i\in S_0^c} \prob_{\theta_0}\Big(\Phi^{-1}(1-\frac{\alpha}{2}) \le |A_i| \Big) \le \alpha\,.
\end{align*}
Here, the second equality follows from construction of $p$-values $P_i$ as per Eq.\eqref{eq:p-value}, and
the fact that $\theta_{0,i} =0$, for $i\in S_0^c$; the inequality follows from Eq.~\eqref{eq:T-2}, with $b = \Phi^{-1}(1-\alpha/2)$.
%where the last equality follows from Eq.~\eqref{eq:distribution2}.
%Since the above holds for any $\eps > 0$ and $|\cC_n(\eps)| = o(s_0)$ on the event $\cE$, we obtain
%%
%\begin{align*}
%\lim_{n\to \infty}  \frac{1}{p-s_0} \sum_{i\in S_0^c} \prob_{\theta_0}(T_{i,\bX}(y)=1) \le \alpha\,.
%\end{align*}
%%
%

Eq.~\eqref{eq:power} can be proved in a similar way, as follows: 
%By Proposition~\ref{propo:GBR}, $|\hOmega\hSigma\hOmega - \Omega|_\infty = o(1)$, w.h.p and hence for any $i \in [p]$,
%%
%\begin{eqnarray}
%[\hOmega\hSigma\hOmega]_{ii} \ge \Omega_{ii} - |\hOmega\hSigma\hOmega - \Omega|_\infty  = \Omega_{ii} - o(1)\,. \label{eq:sigmaiB}
%\end{eqnarray}
%% 
%The above bound leads to a lower bound for the power as follows:
%
\begin{align*}
\frac{1}{s_0} \sum_{i\in S_0} \prob_{\theta_0}(T_{i,\bX}(y)=1) &=
\frac{1}{s_0} \sum_{i\in S_0} \prob_{\theta_0}(P_i \le \alpha) \\
&=  \frac{1}{s_0} \sum_{i\in S_0} \prob_{\theta_0} \Big(\Phi^{-1}(1-\frac{\alpha}{2}) \le \frac{\sqrt{n} |\htheta^u_i|}{\hsigma [\hOmega \hSigma \hOmega^\sT]_{ii}^{1/2}} \Big)  \\
&= \frac{1}{s_0} \sum_{i\in S_0} \prob_{\theta_0} \Big(\Phi^{-1}(1-\frac{\alpha}{2}) \le
\Big|\frac{\sigma}{\hsigma} \tZ_i + \frac{\sqrt{n} \theta_{0,i} + \Delta_i}
{\hsigma  [\hOmega \hSigma \hOmega^\sT]_{ii}^{1/2}}\Big| \Big)\,. 
%&\stackrel{(a)}{\ge} \lim_{n\to \infty} \frac{1}{s_0(1-\beta^*(\theta_0;n))} \times\nonumber\\
%& \Big\{ \sum_{i\in S_0 \backslash \cC_n(\eps)} 
%\prob\Big(\Phi^{-1}(1-\frac{\alpha}{2}) \le \Big|Z_i + \frac{\sqrt{n} \theta_{0,i}}{\sigma 
%\sqrt{\Omega_{ii}}}\Big| \Big) - |\cC_n(\eps)|\Big\} \nonumber\\
%&\stackrel{(b)}\ge 1
%- \lim_{n\to \infty}  \frac{1}{1-\beta^*(\theta_0;n)} \Big\{ 2c\eps + \frac{|\cC_n(\eps)|}{s_0}\Big\} 
%\nonumber\\
%&\stackrel{(c)}= 1- 2c\eps \,. \label{eq:power-bound}
\end{align*}
Define $\eta_i \equiv (\sqrt{n} \theta_{0,i} + \Delta_i)/(\sigma  [\hOmega \hSigma \hOmega^\sT]_{ii}^{1/2})$.
Rewriting the above identity we have
\begin{align}
\frac{1}{s_0} \sum_{i\in S_0} \prob_{\theta_0}(T_{i,\bX}(y)=1) &= 
\frac{1}{s_0}\E\Big\{ \sum_{i\in S_0} \ind \Big( \frac{\hsigma}{\sigma} \Phi^{-1}(1-\frac{\alpha}{2}) \le |\tZ_i + \eta_i | \Big) \Big\}\nonumber\\
&\ge \frac{1}{s_0}\E\Big\{ \sum_{i\in S_0\backslash\cC_n(\eps)} \ind \Big( \frac{\hsigma}{\sigma} \Phi^{-1}(1-\frac{\alpha}{2}) \le |\tZ_i + \eta_i | \Big)
\Big\}\,.\label{eq:eta-1}
\end{align}
By definition, $|\Delta_i| \le \eps$ for $i\in S_0\backslash\cC_n(\eps)$. Therefore, on the event $\cG$
we have 
$$|\eta_i|  \ge \eta^*_i \equiv \frac{\sqrt{n}|\theta_{0,i}|-\eps}{\sigma \sqrt{\Omega_{ii}+c_0}}\,, \quad \text{for }i \in S_0\backslash \cC_n(\eps)\,.$$
 Moreover, $\hsigma/\sigma \le 1+\delta$.
%Since $\tZ_i|\bX \sim \normal(0,1)$, it is easy to see that Eq.~\eqref{eq:eta-1} yields the following:
Fix arbitrary $\delta' >0$ and define the event $\tilde{\cG}$ as in the following
\[
\tilde{\cG} \equiv \cG \cap \Big\{\frac{\cC_n(\eps)}{|s_0|} \le \delta' \Big\}\,.
\]
Using Eq.~\eqref{eq:eta-1}, we have
\begin{align}
\frac{1}{s_0} \sum_{i\in S_0} \prob_{\theta_0}(T_{i,\bX}(y)=1)
&\ge \frac{1}{s_0}\E\Big\{ \Big[ \sum_{i\in S_0\backslash\cC_n(\eps)} \ind \Big(\frac{\hsigma}{\sigma} \Phi^{-1}(1-\frac{\alpha}{2}) \le |\tZ_i + \eta_i | \Big)\Big]
\cdot \ind(\tilde{\cG})
\Big\} - \prob(\tilde{\cG}^c) \nonumber\\
&\ge \frac{1}{s_0}\E\Big\{ \Big[\sum_{i\in S_0} \ind \Big(\frac{\hsigma}{\sigma} \Phi^{-1}(1-\frac{\alpha}{2}) \le |\tZ_i + \eta_i | \Big)
- |\cC_n(\eps)| \Big] \cdot \ind(\tilde{\cG})\Big\} - \prob(\tilde{\cG}^c) \nonumber\\
&\ge \frac{1}{s_0}\E\Big\{\sum_{i\in S_0} \ind \Big((1+\delta) \Phi^{-1}(1-\frac{\alpha}{2}) \le |\tZ_i + \eta^*_i | \Big)
\Big\} - \delta' - \prob(\tilde{\cG}^c)\,,
\end{align}
where the last step follows from definition of event $\tilde{\cG}$.
Hence,
\begin{align*}
 &\underset{n\to \infty}{\lim\inf}\, \frac{1}{s_0(1-\beta^*(\theta_0;n))} \Big\{\sum_{i\in S_0} \prob_{\theta_0}(T_{i,\bX}(y)=1) \Big\} \\
 &\ge \underset{n\to \infty}{\lim\inf}\, \frac{1}{s_0(1-\beta^*(\theta_0;n))}\, \E\Big\{ \sum_{i\in S_0} \ind \Big((1+\delta) \Phi^{-1}(1-\frac{\alpha}{2}) \le |\tZ_i + \eta^*_i | \Big)\Big\}
  - \delta' - \lim_{n\to \infty} \prob(\tilde{\cG}^c)\,. 
\end{align*}
Given that $\hsigma$ is a consistent estimator for $\sigma$, and using Lemma~\ref{lem:BoundVariance} and Corollary~\ref{cor:main}, the event $\tilde{\cG}$ holds with high probability, i.e., $\lim_{n \to \infty} \prob(\tilde{\cG}^c) = 0$. Since the above bound holds for all $\delta', \eps, c_0 > 0$, we get
\begin{align*}
 &\underset{n\to \infty}{\lim\inf}\, \frac{1}{s_0(1-\beta^*(\theta_0;n))} \Big\{\sum_{i\in S_0} \prob_{\theta_0}(T_{i,\bX}(y)=1) \Big\} \\
 &\ge \underset{n\to \infty}{\lim\inf}\, \frac{1}{s_0(1-\beta^*(\theta_0;n))} \E\Big\{ \sum_{i\in S_0} \ind \Big(\Phi^{-1}(1-\frac{\alpha}{2}) \le \Big|\tZ_i + \frac{\sqrt{n}|\theta_{0,i}|}{\sigma \sqrt{\Omega_{ii}}} \Big| \Big)\Big\}\\
& = \underset{n\to \infty}{\lim\inf}\, \frac{1}{(1-\beta^*(\theta_0;n))} \Big\{ \frac{1}{s_0} \sum_{i\in S_0} G\Big(\alpha, \frac{\sqrt{n}|\theta_{0,i}|}{\sigma \sqrt{\Omega_{ii}}}\Big)\Big\} = 1\,.
\end{align*}
The last step follows from definition of function $G(\cdot,\cdot)$, as per Eq.~\eqref{Eq:Gfun}, and the fact that $\tZ_i|\bX \sim \normal(0,1)$.
%Here $(a)$ follows from Eq.~\eqref{eq:sigmaiB}. In $(b)$, we used the fact that,
%by Eq.~\eqref{eq:distribution2}, for all $i\in S_0\backslash\cC_n(\eps)$, 
%%
%\begin{align*}
%&\prob\Big(\Phi^{-1}(1-\frac{\alpha}{2}) \le \Big|Z_i + \frac{\sqrt{n} \theta_{0,i}}{\sigma 
%\sqrt{\Omega_{ii}}}\Big| \Big) \\
%&\ge 1- \Phi\Big(\Phi^{-1}(1-\frac{\alpha}{2}) - \frac{\sqrt{n} |\theta_{0,i}|}{\sigma \sqrt{\Omega_{ii}}}\Big) \\
%&\quad+ \Phi\Big(-\Phi^{-1}(1-\frac{\alpha}{2}) - \frac{\sqrt{n} |\theta_{0,i}|}{\sigma \sqrt{\Omega_{ii}}}\Big) -  2c \eps\\
%& = G\Big(\alpha,\frac{\sqrt{n} |\theta_{0,i}|}{\sigma \sqrt{\Omega_{ii}}}\Big) - 2c\eps\,,
%%& \ge 1-\beta^*(\mu;n) - 2c \eps \,,
%\end{align*}
%%
%for some constant $c > 0$. Also $|\cC_n(\eps)| = o(s_0)$. 
%
%In $(c)$ we used the fact that $\lim_{n\to \infty} (1-\beta^*(\theta_0;n)) = 1$, and $|\cC_n(\eps)|= o(s_0)$ on the event $\cE$.
%
%Since the bound~\eqref{eq:power-bound} holds for any $\eps > 0$, we obtain that 
%%
%\[
%\lim_{n \to \infty} \frac{1}{1-\beta^*(\theta_0;n)} \Big\{ \frac{1}{s_0} \sum_{i\in S_0} \prob_{\theta_0} (T_{i,\bX}(y)=1) \Big\} \ge 1\,.
%\]
%
\section*{Acknowledgements}
A.J. is supported by a Caroline and Fabian
Pease Stanford Graduate Fellowship.
This work was partially supported by the NSF CAREER award CCF-0743978, the NSF
grant DMS-0806211, and the grants AFOSR/DARPA FA9550-12-1-0411 and FA9550-13-1-0036.
%%%%%%%%%%%%%%%%%%%%%%%%%%%%%%%%%%%%%%%%%%%%%%%%%%%%%%%%%%%%%%%%%%%%%%%%%%%%%%%%
%\section{APPENDIX}
\appendix
\section{Proof of Proposition~\ref{pro:main_thm}}
\label{app:main_thm}

Plugging in $Y= \bX \theta_0 +W$, we have
\begin{align*}
&\sqrt{n}(\htheta^u - \theta_0)\\ 
&= \sqrt{n} \Big\{\htheta - \theta_0 + \frac{1}{n} \hOmega \bX^\sT W + 
\frac{1}{n} \hOmega \bX^\sT \bX(\theta_0 - \htheta) \Big\}\\
&= Z+\Delta\,,
\end{align*}
where $Z = \hOmega \bX^\sT W/\sqrt{n}$, and $\Delta = \sqrt{n} (\hOmega\hSigma - \id)(\theta_0 -\htheta)$.
Conditional on $\bX$, we have $Z \sim \normal(0,\sigma^2 \hOmega \hSigma \hOmega^\sT)$, since $W \sim \normal(0,\sigma^2\id)$.
%===============================================================
\section{Proof of Proposition~\ref{pro:Bickel}}
\label{app:Lasso-supp-size}
This proposition is a slightly improved version of Theorem 7.2 in~\cite{BickelEtAl}, in that we 
replace $\phi_{\max}(p)$ by $\phi_{\max}(n)$ in the bound on $\|\htheta\|_0$.
Here, we prove Eq.~\eqref{eq:Lasso-supp-size}.  

Let $\hS \equiv \supp(\htheta)$. Recall that the stationarity condition for the Lasso cost function reads
$\bX^{\sT}(y-\bX\htheta) = n\lambda\, v(\htheta)$, where
$v(\htheta)\in\partial\|\htheta\|_1$. Equivalently,
\begin{align*}
\frac{1}{n}\bX^{\sT}\bX(\theta_0-\htheta) = \lambda\,
v(\htheta)-\frac{1}{n}\bX^{\sT} w\, .
\end{align*}
As proved in \cite{BickelEtAl}, 
$\|\bX^\sT w\|_\infty \le n\lambda/2$ with high probability. Thus for all $i\in \hS$
\begin{align*}
\left|\frac{1}{n}\bX^{\sT}\bX(\theta_0-\htheta)\right|_i \ge
\frac{\lambda}{2}\, .
\end{align*}

Let $\proj_{\hS}$ be the orthogonal projector in $\reals^p$ on the
subspace of vectors with support in $\hS$. Squaring and summing the last identity
over $i\in\hS$, we obtain, for $h \equiv
n^{-1/2}\bX(\theta_0-\htheta)$,
\begin{align*}
|\hS|&\le \frac{4}{\lambda^2}\,\<h,\frac{1}{n}\bX\proj_{\hS}\bX^{\sT}\,
h\>\\
&\le  \frac{4}{\lambda^2}\,\phi_{\rm max}(|\hS|)^2\|h\|_2^2
\le \frac{4\phi_{\rm max}(n)^2}{\lambda^2} \|h\|^2\, ,
\end{align*}
where the last inequality follows because $|\hS|\le n$ by the fact
that the columns of $\bX$ are in generic positions (see e.g.~\cite[Lemma 3]{Tib-Unique}). By  
\cite[Theorem 6.2]{BickelEtAl}, we have
$n^{-1}\|\bX(\theta_0-\htheta)\|_2^2\le
16\lambda^2s_0/\kappa(s_0,3)^2$, whence the claim follows.
 
%===============================================================
\section{Auxiliary lemmas}
\begin{lemma}\label{lem:subG-subE}
For any two random variables $X$ and $Y$, we have
\[
\|XY\|_{\psi_1} \le 2\|X\|_{\psi_2} \|Y\|_{\psi_2}\,.
\]
\end{lemma}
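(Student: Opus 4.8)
The plan is to reduce everything to Cauchy--Schwarz together with the defining suprema of the $\psi_2$ norms. First I would fix an arbitrary $q\ge 1$ and write $\E|XY|^q = \E\big(|X|^q|Y|^q\big)$, then apply the Cauchy--Schwarz inequality in the form
\[
\E\big(|X|^q|Y|^q\big) \le \big(\E|X|^{2q}\big)^{1/2}\big(\E|Y|^{2q}\big)^{1/2}\,,
\]
so that $\big(\E|XY|^q\big)^{1/q} \le \big(\E|X|^{2q}\big)^{1/(2q)}\big(\E|Y|^{2q}\big)^{1/(2q)}$.

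Next I would control each factor on the right using the definition of the sub-gaussian norm. Since $q\ge 1$ implies $2q\ge 1$, the exponent $2q$ is admissible in the supremum defining $\|\cdot\|_{\psi_2}$, and therefore $(2q)^{-1/2}\big(\E|X|^{2q}\big)^{1/(2q)} \le \|X\|_{\psi_2}$, i.e. $\big(\E|X|^{2q}\big)^{1/(2q)} \le \sqrt{2q}\,\|X\|_{\psi_2}$, and likewise for $Y$. Combining, $\big(\E|XY|^q\big)^{1/q} \le 2q\,\|X\|_{\psi_2}\|Y\|_{\psi_2}$.

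Finally I would divide by $q$ and take the supremum over $q\ge 1$: $q^{-1}\big(\E|XY|^q\big)^{1/q} \le 2\|X\|_{\psi_2}\|Y\|_{\psi_2}$ for every $q\ge 1$, hence $\|XY\|_{\psi_1} = \sup_{q\ge 1} q^{-1}\big(\E|XY|^q\big)^{1/q} \le 2\|X\|_{\psi_2}\|Y\|_{\psi_2}$, which is the claim. There is no serious obstacle here; the only point requiring a moment's care is checking that the exponent $2q$ stays in the range over which the $\psi_2$-norm supremum is taken, which is immediate because $q\ge 1$ forces $2q\ge 2\ge 1$.
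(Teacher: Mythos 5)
Your argument is correct and is essentially the paper's own proof: Cauchy--Schwarz to split $\E|XY|^q$, then the defining supremum of the $\psi_2$ norm applied at exponent $2q$, yielding the factor $2q$ and hence the constant $2$ after dividing by $q$ and taking the supremum. The only cosmetic difference is that you bound each moment factor by $\sqrt{2q}\,\|\cdot\|_{\psi_2}$ pointwise in $q$, whereas the paper reorganizes the same bound as a supremum over $q\ge 2$; the content is identical.
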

\begin{proof}
By definition of sub-exponential and sub-gaussian norms, we write
\begin{align*}
\|XY\|_{\psi_1} &= \underset{p\ge 1}{\sup} \, p^{-1} \E(|XY|^p)^{1/p}\\
&\le \underset{p\ge 1}{\sup} \, p^{-1} \E(|X|^{2p})^{1/2p} \E(|Y|^{2p})^{1/2p}\\
&\le 2 \Big( \underset{q\ge 2}{\sup} \,q^{-1/2} \E(|X|^q)^{1/q} \Big) \Big(  \underset{q\ge 2}{\sup} \,q^{-1/2} \E(|Y|^q)^{1/q} \Big)\\
&\le 2 \|X\|_{\psi_2} \|Y\|_{\psi_2}\,.
\end{align*}
Here, the first inequality follows from Cauchy-Schwartz inequality. 
\end{proof}
%===============================================================
\section{Proof of Lemma~\ref{lem:net}}
\label{app:net}
Each vector $u\in \cF_1$ can be written as $u = \sum_{i=0}^\infty 2^{-i} u_i$,
where $u_i \in {\cal N}_1$. Similarly, each vector $v\in \cF_2$ can be written
as $v = \sum_{j=0}^\infty 2^{-j} v_j$. Therefore,
\[
\<u,Mv\> = \sum_{i,j=0}^\infty 2^{-i-j} \<u_i,Mv_j\> \le 4 \underset{u'\in {\cal N}_1,v'\in {\cal N}_2}{\sup} \<u',Mv'\>\,.
\]
The result follows.
%

%%============================================================
%\subsection{Proof of Lemma~\ref{lem:Omega-bound}}
%\label{app:Omega-bound}
%We have
%%
%\begin{align}
%|\hOmega\hSigma\hOmega - \Omega|_\infty &\le |(\hOmega \hSigma - \id) \hOmega|_\infty + |\hOmega - \Omega|_\infty \nonumber\\
%&\le |\hOmega \hSigma - \id|_{\infty} \|\hOmega\|_\infty + \|\hOmega - \Omega\|_2 \label{eq:temp3}
%\end{align}
%%
%Note that $|\hOmega \hSigma - \id|_\infty \le \gamma_n = O(\sqrt{\log p /n})$, and
%$\|\hOmega\|_\infty \le \|\hOmega_1\|_\infty \le \|\Omega\|_\infty \le M$ by
%definition of \CLIME estimator given in Table $2$. 
%Using these bounds along with Eq.~\eqref{eq:spectral-loss} in Eq.~\eqref{eq:temp3},
%we get
%%
%\[
%|\hOmega\hSigma\hOmega - \Omega|_\infty = O(t_0 \sqrt{\log p /n}) = o(1)\,. 
%\]
%%
%Hence
%%
%$$ [\hOmega \hSigma \hOmega]_{ii} \ge \Omega_{ii} - o(1) \ge \sigma_{\min}(\Omega) - o(1)\,.$$
%%

%%%%%%%%%%%%%%%%%%%%%%%%%%%%%%%%%%%%%%%%%%%%%%%%%%%%%%%%%%%%%%%%%%%%%%%%%%%%%%%%

\bibliographystyle{amsalpha}
\bibliography{all-bibliography}

\providecommand{\bysame}{\leavevmode\hbox to3em{\hrulefill}\thinspace}
\providecommand{\MR}{\relax\ifhmode\unskip\space\fi MR }
% \MRhref is called by the amsart/book/proc definition of \MR.
\providecommand{\MRhref}[2]{%
  \href{http://www.ams.org/mathscinet-getitem?mr=#1}{#2}
}
\providecommand{\href}[2]{#2}
\begin{thebibliography}{vdGBR13}

\bibitem[BCW11]{belloni2011square}
Alexandre Belloni, Victor Chernozhukov, and Lie Wang, \emph{Square-root lasso:
  pivotal recovery of sparse signals via conic programming}, Biometrika
  \textbf{98} (2011), no.~4, 791--806.

\bibitem[BRT09]{BickelEtAl}
P.~J. Bickel, Y.~Ritov, and A.~B. Tsybakov, \emph{{Simultaneous analysis of
  Lasso and Dantzig selector}}, Annals of Statistics \textbf{37} (2009),
  1705--1732.

\bibitem[B{\"u}h12]{BuhlmannSignificance}
P.~B{\"u}hlmann, \emph{{Statistical significance in high-dimensional linear
  models}}, {\sf arXiv:1202.1377}, 2012.

\bibitem[BvdG11]{buhlmann2011statistics}
Peter B{\"u}hlmann and Sara van~de Geer, \emph{Statistics for high-dimensional
  data}, Springer-Verlag, 2011.

\bibitem[CD95]{BP95}
S.S. Chen and D.L. Donoho, \emph{{Examples of basis pursuit}}, Proceedings of
  Wavelet Applications in Signal and Image Processing III (San Diego, CA),
  1995.

\bibitem[CT05]{CandesTao}
E.~J. Cand\'es and T.~Tao, \emph{{Decoding by linear programming}}, IEEE Trans.
  on Inform. Theory \textbf{51} (2005), 4203--4215.

\bibitem[CT07]{Dantzig}
E.~Cand\'es and T.~Tao, \emph{{The Dantzig selector: statistical estimation
  when p is much larger than n}}, Annals of Statistics \textbf{35} (2007),
  2313--2351.

\bibitem[DMM09]{DMM09}
D.~L. Donoho, A.~Maleki, and A.~Montanari, \emph{{Message Passing Algorithms
  for Compressed Sensing}}, Proceedings of the National Academy of Sciences
  \textbf{106} (2009), 18914--18919.

\bibitem[DT05]{DoTa05}
D.~L. Donoho and J.~Tanner, \emph{Neighborliness of randomly-projected
  simplices in high dimensions}, Proceedings of the National Academy of
  Sciences \textbf{102} (2005), no.~27, 9452--9457.

\bibitem[JM13a]{confidenceJM}
Adel Javanmard and Andrea Montanari, \emph{{Confidence Intervals and Hypothesis
  Testing for High-Dimensional Regression}}, {\sf arXiv:1306.3171}, 2013.

\bibitem[JM13b]{javanmard2013hypothesis}
\bysame, \emph{Hypothesis testing in high-dimensional regression under the
  gaussian random design model: Asymptotic theory}, {\sf arXiv:1301.4240},
  2013.

\bibitem[MB06]{MeinshausenBuhlmann}
N.~Meinshausen and P.~B{\"u}hlmann, \emph{High-dimensional graphs and variable
  selection with the lasso}, Ann.~Statist. \textbf{34} (2006), 1436--1462.

\bibitem[RZ13]{rudelson2011reconstruction}
Mark Rudelson and Shuheng Zhou, \emph{Reconstruction from anisotropic random
  measurements}, IEEE Transactions on Information Theory \textbf{59} (2013),
  no.~6, 3434--3447.

\bibitem[SZ12]{SZ-scaledLasso}
Tingni Sun and Cun-Hui Zhang, \emph{Scaled sparse linear regression},
  Biometrika \textbf{99} (2012), no.~4, 879--898.

\bibitem[Tib96]{Tibs96}
R.~Tibshirani, \emph{{Regression shrinkage and selection with the Lasso}}, J.
  Royal. Statist. Soc B \textbf{58} (1996), 267--288.

\bibitem[Tib13]{Tib-Unique}
Ryan~J. Tibshirani, \emph{{The lasso problem and uniqueness}}, Electronic
  Journal of Statistics \textbf{7} (2013), 1456--1490.

\bibitem[vdGBR13]{GBR-hypothesis}
S.~van~de Geer, P.~B{\"u}hlmann, and Y.~Ritov, \emph{{On asymptotically optimal
  confidence regions and tests for high-dimensional models}}, {\sf
  arXiv:1303.0518}, 2013.

\bibitem[Ver12]{Vershynin-CS}
R.~Vershynin, \emph{Introduction to the non-asymptotic analysis of random
  matrices}, Compressed Sensing: Theory and Applications (Y.C. Eldar and
  G.~Kutyniok, eds.), Cambridge University Press, 2012, pp.~210--268.

\bibitem[Wai09]{Wainwright2009LASSO}
M.J. Wainwright, \emph{Sharp thresholds for high-dimensional and noisy sparsity
  recovery using $\ell_1$-constrained quadratic programming}, IEEE Trans. on
  Inform. Theory \textbf{55} (2009), 2183--2202.

\bibitem[ZZ11]{ZhangZhangSignificance}
C.-H. Zhang and S.S. Zhang, \emph{{Confidence Intervals for Low-Dimensional
  Parameters in High-Dimensional Linear Models}}, {\sf arXiv:1110.2563}, 2011.

\end{thebibliography}

\end{document}